\newtheorem{theorem}{Theorem}[section]
\newtheorem{lemma}[theorem]{Lemma}
\newtheorem{corollary}[theorem]{Corollary}
\newtheorem{conjecture}[theorem]{Conjecture}
\newcommand{\LL}{\mathcal{L}}
\newcommand{\R}{\mathcal{R}}
\newcommand{\D}{\mathcal{D}}
\newcommand{\N}{\mathcal{N}}
\newcommand{\Nhat}{\widehat{\mathcal{N}}}
\newcommand{\Ncheck}{\widecheck{\mathcal{N}}}
\begin{document}

\title{Solving Abalone on Small Boards}
\author{J. Gutstadt \thanks{ Egen Solutions, {jgutstadt20@gmail.com} }\and K. Hogenson \thanks{Skidmore College, {khogenso@skidmore.edu}} \and J. Koerner \thanks{Corresponding author. Ascend Analytics, {koernerjw@gmail.com} }}

\maketitle

\begin{abstract}
   \noindent Abalone is a 2-player board game with perfect information. The game is played on a $5\times5\times5$ hexagonal grid and ends when a player pushes 6 of their opponents' pieces off the board. Abalone is similar to games like chess and Go in that all three games have high branching factors, making it difficult for a computer to determine the outcome of a game. However, solving smaller, simplified versions of Abalone can offer insight into how to play the full-size game optimally.  In this paper, we strongly solve a variation of Abalone played on a $2\times2\times2$ hexagonal board.  We also weakly solve an Abalone variation on a $2\times2\times3$ hexagonal board.
\end{abstract}


\section{Introduction}
\label{sec:intro}

Abalone is a board game designed by Michel Lalet and Laurent Levi in 1987. It was made available to the public in September 1988 and won ``Game of the Decade'' in 1998 from the Festival International des Jeux \cite{GamesCrafters}.  It also won a Mensa Select award in 1990 and was Spiel des Jahres Recommended in 1989 \cite{BoardGameGeek}.

Since Abalone is a 2-player game with perfect information and alternating turns, it bears similarity to other well-known games like chess, Go, and checkers. These games are well-studied, but the large state spaces and branching factors for chess and Go make even weak solutions difficult to attain. Instead, theorists have focused on developing Artificial Intelligence (AI) agents to play these games and on studying simplified game variants. Abalone has seen similar treatment in the AI department \cite{aichholzer2002algorithmic, campos, chorus, claussen, hutson, Lemons, ozcan2004simple, papa, verloop}, but until now, no simplified variants of Abalone have been considered.  In this paper, we will begin to explore the mathematics behind Abalone by considering small and non-regular Abalone boards.

\subsection{Gameplay}
\label{subsec:gameplay}

Reminiscent of Chinese checkers, the Abalone game board is a regular hexagonal grid consisting of 61 spaces with 5 spaces on each side. There are two players: Left, who plays with black marbles, and Right, who plays with gray marbles. For clarity, we will use Black and Gray to refer to these players for the remainder of the paper.  Each player starts with 14 marbles that they take turns moving. The players can move 1 to 3 contiguous marbles each turn, in either a line or broadside motion. The marbles must move into empty spaces unless they are moving in a line where the length of the line outnumbers the length of the opponent's line occupying the desired spot. In this situation, the player with a longer line can push their opponent's marbles in the direction of the motion (3 marbles can push 1 or 2 marbles, 2 marbles can push 1).  The objective of the game is to push 6 of the opponent's marbles off the edge of the board.  See Figure \ref{fig:movingandpushing} for examples of broadside movement, in-line movement, and a \emph{sumito}, or push, move.

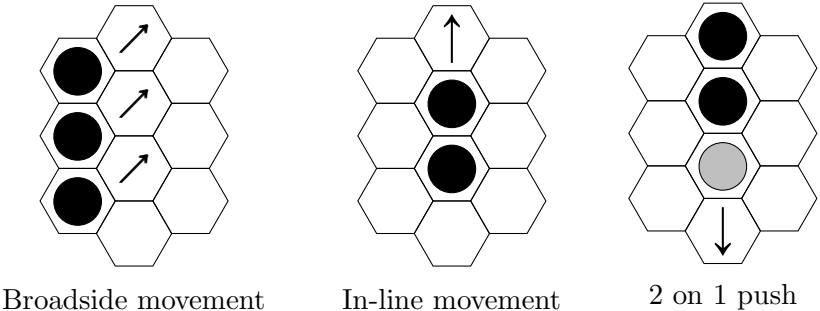
\begin{figure}[ht!]
\caption{Legal moves and pushes.}
\label{fig:movingandpushing}
\centering
\vspace{.5cm}
    \begin{tikzpicture}[hexa/.style= {shape=regular polygon,
                                   regular polygon sides=6,
                                   minimum size=1cm, draw,
                                   inner sep=0,anchor=south,}]
    \foreach \j in {0,...,2}{%
     \ifodd\j 
         \foreach \i in {0,...,3}{\node[hexa] at ({\j/2+\j/4},{(\i+1/2)*sin(60)}) {};}        
    \else
         \foreach \i in {1,...,3}{\node[hexa] at ({\j/2+\j/4},{\i*sin(60)}) {};}
    \fi}
    \draw[fill=black] (0,{(2.5)*sin(60)}) circle (9pt);
    \draw[fill=black] (0,{(1.5)*sin(60)}) circle (9pt);
    \draw[fill=black] (0,{(3.5)*sin(60)}) circle (9pt);
    \node at (.75,{2*sin(60)}) {\large $\boldsymbol\nearrow$};
    \node at (.75,{3*sin(60)}) {\large $\boldsymbol\nearrow$};
    \node at (.75,{4*sin(60)}) {\large $\boldsymbol\nearrow$};
    \node at (.75,0) {Broadside movement};
    
    \end{tikzpicture}
    \hspace{.5cm}
    \begin{tikzpicture} [hexa/.style= {shape=regular polygon,
                                   regular polygon sides=6,
                                   minimum size=1cm, draw,
                                   inner sep=0,anchor=south,}]
    \foreach \j in {0,...,2}{%
     \ifodd\j 
         \foreach \i in {0,...,3}{\node[hexa] at ({\j/2+\j/4},{(\i+1/2)*sin(60)}) {};}        
    \else
         \foreach \i in {1,...,3}{\node[hexa] at ({\j/2+\j/4},{\i*sin(60)}) {};}
    \fi}
    \draw[fill=black] (0.75,{(2)*sin(60)}) circle (9pt);
    \draw[fill=black] (0.75,{(3)*sin(60)}) circle (9pt);
    \node at (.75,{4*sin(60)}) {\huge ${\uparrow}$};
    \node at (.75,0) {In-line movement};
    
    \end{tikzpicture}
    \hspace{.5cm}
    \begin{tikzpicture} [hexa/.style= {shape=regular polygon,
                                   regular polygon sides=6,
                                   minimum size=1cm, draw,
                                   inner sep=0,anchor=south,}]
    \foreach \j in {0,...,2}{%
     \ifodd\j 
         \foreach \i in {0,...,3}{\node[hexa] at ({\j/2+\j/4},{(\i+1/2)*sin(60)}) {};}        
    \else
         \foreach \i in {1,...,3}{\node[hexa] at ({\j/2+\j/4},{\i*sin(60)}) {};}
    \fi}
    \draw[fill=black] (0.75,{(4)*sin(60)}) circle (9pt);
    \draw[fill=black] (0.75,{(3)*sin(60)}) circle (9pt);
    \draw[fill=lightgray] (0.75,{(2)*sin(60)}) circle (9pt);
    \node at (.75,{1*sin(60)}) {\huge ${\downarrow}$};
    \node at (.75,0) {2 on 1 push};
    
    \end{tikzpicture}
\end{figure}

There are multiple accepted starting positions in Abalone. The standard setup, seen on the left in Figure~\ref{fig:5x5x5board}, lines up each players' marbles on opposite sides of the board. This is the setup described in the board game rules, but many tournaments opt to use alternate configurations such as the Belgian daisy, which can be seen on the right in Figure~\ref{fig:5x5x5board}.  Conventional knowledge surrounding the game suggests that the standard configuration allows for interminable defensive strategies, while the Belgian daisy encourages more aggressive play and leads to fewer draws \cite{Wikipedia}. 

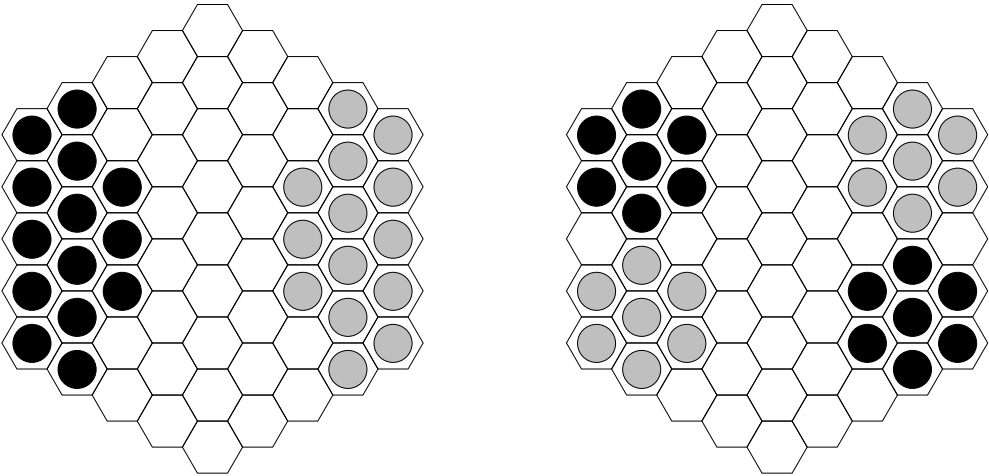
\begin{figure}[ht!]
\caption{The standard initial configuration (left) and Belgian daisy (right).}
\label{fig:5x5x5board}
\begin{minipage}[b]{0.5\linewidth}
\centering
\vspace{.5cm}
\scalebox{0.8}{
    \begin{tikzpicture} [hexa/.style= {shape=regular polygon,
                                   regular polygon sides=6,
                                   minimum size=1cm, draw,
                                   inner sep=0,anchor=south,}]
    \foreach \j in {0,...,8}{%
        \ifthenelse{\j=0 \OR \j=8}{\foreach \i in {2,...,6}{\node[hexa] at ({\j/2+\j/4},{\i*sin(60)}) {};}}
            {\ifthenelse{\j=1 \OR \j=7}{\foreach \i in {1,...,6}{\node[hexa] at ({\j/2+\j/4},{(\i+1/2)*sin(60)}) {};}}
                {\ifthenelse{\j=2 \OR \j=6}{\foreach \i in {1,...,7}{\node[hexa] at ({\j/2+\j/4},{\i*sin(60)}) {};}}
                    {\ifthenelse{\j=3 \OR \j=5}{\foreach \i in {0,...,7}{\node[hexa] at ({\j/2+\j/4},{(\i+1/2)*sin(60)}) {};}}
                        {\foreach \i in {0,...,8}{\node[hexa] at ({\j/2+\j/4},{\i*sin(60)}) {};}}}}}}
    
    \foreach \i in {2,...,6}{
        \draw[fill=black] (0,{(\i+1/2)*sin(60)}) circle (9pt);}
    \foreach \i in {2,...,7}{
        \draw[fill=black] (3/4,{(\i)*sin(60)}) circle (9pt);}
    \foreach \i in {3,...,5}{
        \draw[fill=black] (1.5,{(\i+1/2)*sin(60)}) circle (9pt);}
    \foreach \i in {2,...,6}{
        \draw[fill=lightgray] (6,{(\i+1/2)*sin(60)}) circle (9pt);}
    \foreach \i in {2,...,7}{
        \draw[fill=lightgray] (5.25,{(\i)*sin(60)}) circle (9pt);}
    \foreach \i in {3,...,5}{
        \draw[fill=lightgray] (4.5,{(\i+1/2)*sin(60)}) circle (9pt);}
    
\end{tikzpicture}
}
\end{minipage}
\quad
\begin{minipage}[b]{0.5\linewidth}
\scalebox{0.8}{
\begin{tikzpicture} [hexa/.style= {shape=regular polygon,
                                   regular polygon sides=6,
                                   minimum size=1cm, draw,
                                   inner sep=0,anchor=south,}]
    \foreach \j in {0,...,8}{%
        \ifthenelse{\j=0 \OR \j=8}{\foreach \i in {2,...,6}{\node[hexa] at ({\j/2+\j/4},{\i*sin(60)}) {};}}
            {\ifthenelse{\j=1 \OR \j=7}{\foreach \i in {1,...,6}{\node[hexa] at ({\j/2+\j/4},{(\i+1/2)*sin(60)}) {};}}
                {\ifthenelse{\j=2 \OR \j=6}{\foreach \i in {1,...,7}{\node[hexa] at ({\j/2+\j/4},{\i*sin(60)}) {};}}
                    {\ifthenelse{\j=3 \OR \j=5}{\foreach \i in {0,...,7}{\node[hexa] at ({\j/2+\j/4},{(\i+1/2)*sin(60)}) {};}}
                        {\foreach \i in {0,...,8}{\node[hexa] at ({\j/2+\j/4},{\i*sin(60)}) {};}}}}}}
    
    \foreach \i in {2,3}{
        \draw[fill=lightgray] (0,{(\i+1/2)*sin(60)}) circle (9pt);}
    \foreach \i in {5,6}{
        \draw[fill=black] (0,{(\i+1/2)*sin(60)}) circle (9pt);}
    \foreach \i in {2,3,4}{
        \draw[fill=lightgray] (3/4,{(\i)*sin(60)}) circle (9pt);}
    \foreach \i in {5,6,7}{
        \draw[fill=black] (3/4,{(\i)*sin(60)}) circle (9pt);}
    \foreach \i in {2,3}{
        \draw[fill=lightgray] (1.5,{(\i+1/2)*sin(60)}) circle (9pt);}
    \foreach \i in {5,6}{
        \draw[fill=black] (1.5,{(\i+1/2)*sin(60)}) circle (9pt);}
    
    \foreach \i in {5,6}{
        \draw[fill=lightgray] (6,{(\i+1/2)*sin(60)}) circle (9pt);}
    \foreach \i in {2,3}{
        \draw[fill=black] (6,{(\i+1/2)*sin(60)}) circle (9pt);}
    \foreach \i in {5,6,7}{
        \draw[fill=lightgray] (5.25,{(\i)*sin(60)}) circle (9pt);}
    \foreach \i in {2,3,4}{
        \draw[fill=black] (5.25,{(\i)*sin(60)}) circle (9pt);}
    \foreach \i in {5,6}{
        \draw[fill=lightgray] (4.5,{(\i+1/2)*sin(60)}) circle (9pt);}
    \foreach \i in {2,3}{
        \draw[fill=black] (4.5,{(\i+1/2)*sin(60)}) circle (9pt);}
    
\end{tikzpicture}
}
\end{minipage}
\end{figure}

One well-known strategy for Abalone is to keep one's marbles gathered together into a compact shape because it's easier to attack and defend. Further, it is best to keep these groups of pieces near the middle of the board where they are less likely to be pushed off.  These principles showed up repeatedly in the score functions of Abalone-playing AI agents \cite{aichholzer2002algorithmic, campos, chorus, Lemons, ozcan2004simple, papa, verloop}. They also contributed to strong play in the simplified versions of Abalone that we will explore.  In many cases, occupying the middle of the board proved necessary to win.

A fork serves as an example of an advantage provided by compact configurations. Similar to a fork in chess \cite{lasker2014chess}, a fork threatens two of the opponent's pieces so that the opponent may only save one of them. Forks in Abalone can take many forms. For an example of a fork in $2\times2\times3$ Abalone, see Figure~\ref{fig:forkfun} where a triangular cluster of black pieces threatens two gray pieces, assuring that one of them can be ejected off the board.

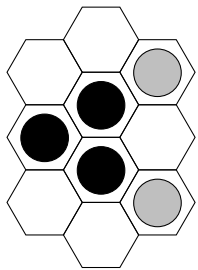
\begin{figure}[ht!]
\caption{Fork example}
\label{fig:forkfun}
\centering
\vspace{.5cm}
    \begin{tikzpicture} [hexa/.style= {shape=regular polygon,
                                   regular polygon sides=6,
                                   minimum size=1cm, draw,
                                   inner sep=0,anchor=south,}]
    \foreach \j in {0,...,2}{%
     \ifodd\j 
         \foreach \i in {0,...,3}{\node[hexa] at ({\j/2+\j/4},{(\i+1/2)*sin(60)}) {};}        
    \else
         \foreach \i in {1,...,3}{\node[hexa] at ({\j/2+\j/4},{\i*sin(60)}) {};}
    \fi}
    \draw[fill=black] (0,{(2+1/2)*sin(60)}) circle (9pt);
    \draw[fill=black] (.75,{(2)*sin(60)}) circle (9pt);
    \draw[fill=black] (.75,{(3)*sin(60)}) circle (9pt);
    \draw[fill=lightgray] (1.5,{(3+1/2)*sin(60)}) circle (9pt);
    \draw[fill=lightgray] (1.5,{(1+1/2)*sin(60)}) circle (9pt);
\end{tikzpicture}
\end{figure}

\subsection{Combinatorial game theory}
\label{subsec:cgt}

To analyze Abalone mathematically, we will use the language and notation of combinatorial game theory as presented in \cite{Siegel}.

A \emph{constellation} is a game board configuration.  That is, each constellation can be represented as a picture of the board where each space contains a black marble, a gray marble, or is empty.  We say that constellations $C$ and $C'$ are \emph{isomorphic} if $C'$ can be obtained from $C$ through a sequence of $60^\circ$ rotations and reflections over the board's 6 axes of symmetry.  The \emph{negative} $-C$ of a constellation $C$ is obtained by swapping the colors of the black and gray marbles in $C$.  We say a board $C$ is \emph{self-negative} if and only if $C$ is isomorphic to $-C$.

The \emph{outcome class} of a constellation $C$, denoted $o(C)$, tells us who will win (if anyone) when gameplay begins at $C$ and both Black and Gray play perfectly.  In this paper, all constellations will belong to one of the 6 following outcome classes:
\begin{itemize}
    \item $\LL$ is the class of constellations which Black (Left) will always win.
    \item $\R$ is the class of constellations which Gray (Right) will always win.
    \item $\D$ is the class of constellations which will always end in a draw.
    \item $\N$ is the class of constellations which will be won by whomever plays next.
    \item $\Nhat$ is the class of constellations which will result in a win for Black (Left) if they play next and a draw if Gray (Right) plays next.
    \item $\Ncheck$ is the class of constellations which will result in a win for Gray (Right) if they play next and a draw if Black (Left) plays next.
\end{itemize}
Notice that if $C$ and $C'$ are isomorphic, then $o(C)=o(C')$ is clearly true.  Further, notice that if $o(C)\in\{\N,\D\}$, then $o(C)=o(-C)$.  This is because the players will simply swap optimal strategies from $C$ to $-C$, making the outcomes identical.  A similar strategy-swapping argument can be used to see that $o(C) = \LL$ if and only if $o(-C)=\R$, and $o(C)=\Nhat$ if and only if $o(-C)=\Ncheck$.

When we say that a game is \emph{weakly solved}, we mean that the outcome class of its official starting constellation is known.  A game is \emph{strongly solved} if the outcome class of every constellation is known.  In both cases, an optimal sequence of moves from the starting constellation is provided.  To provide these sequences of moves, we will refer to the players' options.  When a player can make a move to transition from constellation $C$ to constellation $C'$, we call $C'$ an \emph{option} of that player.  We will denote Black (Left) and Gray (Right)'s sets of options from constellation $C$ as $C^L$ and $C^R$, respectively.  To show both players' options at once, we will write
\[
    C = \{ C^L ~|~ C^R\}.
\]

\subsection{Outline of paper}
\label{subsec:outline}

In Section~\ref{sec:2x2x2}, we describe the $2\times2\times2$ Abalone variant and present a strong solution for it.  In Section~\ref{sec:2x2x3}, we describe the $2\times2\times3$ Abalone variant and determine the outcome class of its starting constellation.  Finally, in Section~\ref{sec:futurework}, we present conjectures and open questions regarding solutions for additional small-board Abalone variants.

\section{A strong solution for 2x2x2 Abalone}
\label{sec:2x2x2}

The first Abalone variant that we will discuss is $2\times 2\times 2$ Abalone.  This variant has the same basic rules and movements as standard Abalone, but it is played on a $2\times 2\times 2$ hexagonal grid, rather than the full-size $5\times 5\times 5$ grid.  Each player starts with two marbles and must push off one of the opponent's marbles to win.  The board begins in the $B1$ configuration depicted in Figure \ref{fig:2x2x2boards}. This configuration is a simplified version of the Belgian daisy configuration from Figure~\ref{fig:5x5x5board}. As mentioned earlier, the Daisy configurations in $5\times 5\times 5$ Abalone were designed to encourage more aggressive play. We use one here to balance the game, as the $2\times2\times2$ analog of the standard setup, labeled as $B0$ in Figure \ref{fig:2x2x2boards}, is an easy win for whomever plays first.

\begin{figure}[p!]
\caption{14 of the 23 nonisomorphic $2\times2\times2$ Abalone constellations.  The 9 nonisomorphic boards which are not pictured are the negatives of boards $B5-B13$.}
\label{fig:2x2x2boards}
\centering
\vspace{.5cm}
    \begin{tikzpicture} [hexa/.style= {shape=regular polygon,
                                   regular polygon sides=6,
                                   minimum size=1cm, draw,
                                   inner sep=0,anchor=south,}]
    \foreach \j in {0,...,2}{%
     \ifodd\j 
         \foreach \i in {0,1,2}{\node[hexa] at ({\j/2+\j/4},{(\i+1/2)*sin(60)}) {};}        
    \else
         \foreach \i in {1,2}{\node[hexa] at ({\j/2+\j/4},{\i*sin(60)}) {};}
    \fi}
    \draw[fill=black] (0,{(1+1/2)*sin(60)}) circle (9pt);
    \draw[fill=black] (0,{(2+1/2)*sin(60)}) circle (9pt);
    \draw[fill=lightgray] (1.5,{(1+1/2)*sin(60)}) circle (9pt);
    \draw[fill=lightgray] (1.5,{(2+1/2)*sin(60)}) circle (9pt);
    \node at (.75,0) {Board $B0$};
\end{tikzpicture}\hspace{.5cm}
\begin{tikzpicture} [hexa/.style= {shape=regular polygon,
                                   regular polygon sides=6,
                                   minimum size=1cm, draw,
                                   inner sep=0,anchor=south,}]

    \foreach \j in {0,...,2}{%
     \ifodd\j 
         \foreach \i in {0,1,2}{\node[hexa] at ({\j/2+\j/4},{(\i+1/2)*sin(60)}) {};}        
    \else
         \foreach \i in {1,2}{\node[hexa] at ({\j/2+\j/4},{\i*sin(60)}) {};}
    \fi}
    \draw[fill=lightgray] (0,{(1+1/2)*sin(60)}) circle (9pt);
    \draw[fill=black] (0,{(2+1/2)*sin(60)}) circle (9pt);
    \draw[fill=black] (1.5,{(1+1/2)*sin(60)}) circle (9pt);
    \draw[fill=lightgray] (1.5,{(2+1/2)*sin(60)}) circle (9pt);
    \node at (.75,0) {Board $B1$};
\end{tikzpicture}\hspace{.5cm}
\begin{tikzpicture} [hexa/.style= {shape=regular polygon,
                                   regular polygon sides=6,
                                   minimum size=1cm, draw,
                                   inner sep=0,anchor=south,}]

    \foreach \j in {0,...,2}{%
     \ifodd\j 
         \foreach \i in {0,1,2}{\node[hexa] at ({\j/2+\j/4},{(\i+1/2)*sin(60)}) {};}        
    \else
         \foreach \i in {1,2}{\node[hexa] at ({\j/2+\j/4},{\i*sin(60)}) {};}
    \fi}
    \draw[fill=lightgray] (0,{(1+1/2)*sin(60)}) circle (9pt);
    \draw[fill=black] (0,{(2+1/2)*sin(60)}) circle (9pt);
    \draw[fill=lightgray] (1.5,{(1+1/2)*sin(60)}) circle (9pt);
    \draw[fill=black] (1.5,{(2+1/2)*sin(60)}) circle (9pt);
    \node at (.75,0) {Board $B2$};
\end{tikzpicture}\hspace{.5cm}
\begin{tikzpicture} [hexa/.style= {shape=regular polygon,
                                   regular polygon sides=6,
                                   minimum size=1cm, draw,
                                   inner sep=0,anchor=south,}]

    \foreach \j in {0,...,2}{%
     \ifodd\j 
         \foreach \i in {0,1,2}{\node[hexa] at ({\j/2+\j/4},{(\i+1/2)*sin(60)}) {};}        
    \else
         \foreach \i in {1,2}{\node[hexa] at ({\j/2+\j/4},{\i*sin(60)}) {};}
    \fi}
    \draw[fill=lightgray] (0,{(1+1/2)*sin(60)}) circle (9pt);
    \draw[fill=black] (0,{(2+1/2)*sin(60)}) circle (9pt);
    \draw[fill=lightgray] (.75,{3*sin(60)}) circle (9pt);
    \draw[fill=black] (.75,{{sin(60)}}) circle (9pt);
    \node at (.75,0) {Board $B3$};
\end{tikzpicture}

\vspace{.5cm}

\begin{tikzpicture} [hexa/.style= {shape=regular polygon,
                                   regular polygon sides=6,
                                   minimum size=1cm, draw,
                                   inner sep=0,anchor=south,}]

    \foreach \j in {0,...,2}{%
     \ifodd\j 
         \foreach \i in {0,1,2}{\node[hexa] at ({\j/2+\j/4},{(\i+1/2)*sin(60)}) {};}        
    \else
         \foreach \i in {1,2}{\node[hexa] at ({\j/2+\j/4},{\i*sin(60)}) {};}
    \fi}
    \draw[fill=lightgray] (0,{(1+1/2)*sin(60)}) circle (9pt);
    \draw[fill=black] (0,{(2+1/2)*sin(60)}) circle (9pt);
    \draw[fill=black] (.75,{3*sin(60)}) circle (9pt);
    \draw[fill=lightgray] (.75,{{sin(60)}}) circle (9pt);
    \node at (.75,0) {Board $B4$};
\end{tikzpicture}\hspace{.5cm}
\begin{tikzpicture} [hexa/.style= {shape=regular polygon,
                                   regular polygon sides=6,
                                   minimum size=1cm, draw,
                                   inner sep=0,anchor=south,}]

    \foreach \j in {0,...,2}{%
     \ifodd\j 
         \foreach \i in {0,1,2}{\node[hexa] at ({\j/2+\j/4},{(\i+1/2)*sin(60)}) {};}        
    \else
         \foreach \i in {1,2}{\node[hexa] at ({\j/2+\j/4},{\i*sin(60)}) {};}
    \fi}
    \draw[fill=lightgray] (0,{(1+1/2)*sin(60)}) circle (9pt);
    \draw[fill=lightgray] (0,{(2+1/2)*sin(60)}) circle (9pt);
    \draw[fill=black] (.75,{3*sin(60)}) circle (9pt);
    \draw[fill=black] (.75,{{sin(60)}}) circle (9pt);
    \node at (.75,0) {Board $B5$};
\end{tikzpicture}\hspace{.5cm}
\begin{tikzpicture} [hexa/.style= {shape=regular polygon,
                                   regular polygon sides=6,
                                   minimum size=1cm, draw,
                                   inner sep=0,anchor=south,}]

    \foreach \j in {0,...,2}{%
     \ifodd\j 
         \foreach \i in {0,1,2}{\node[hexa] at ({\j/2+\j/4},{(\i+1/2)*sin(60)}) {};}        
    \else
         \foreach \i in {1,2}{\node[hexa] at ({\j/2+\j/4},{\i*sin(60)}) {};}
    \fi}
    \draw[fill=lightgray] (0,{(1+1/2)*sin(60)}) circle (9pt);
    \draw[fill=black] (0,{(2+1/2)*sin(60)}) circle (9pt);
    \draw[fill=black] (1.5,{(1+1/2)*sin(60)}) circle (9pt);
    \draw[fill=lightgray] (.75,{(3)*sin(60)}) circle (9pt);
    \node at (.75,0) {Board $B6$};
\end{tikzpicture}\hspace{.5cm}
\begin{tikzpicture} [hexa/.style= {shape=regular polygon,
                                   regular polygon sides=6,
                                   minimum size=1cm, draw,
                                   inner sep=0,anchor=south,}]

    \foreach \j in {0,...,2}{%
     \ifodd\j 
         \foreach \i in {0,1,2}{\node[hexa] at ({\j/2+\j/4},{(\i+1/2)*sin(60)}) {};}        
    \else
         \foreach \i in {1,2}{\node[hexa] at ({\j/2+\j/4},{\i*sin(60)}) {};}
    \fi}
    \draw[fill=lightgray] (0,{(1+1/2)*sin(60)}) circle (9pt);
    \draw[fill=lightgray] (0,{(2+1/2)*sin(60)}) circle (9pt);
    \draw[fill=black] (1.5,{(1+1/2)*sin(60)}) circle (9pt);
    \draw[fill=black] (.75,{(3)*sin(60)}) circle (9pt);
    \node at (.75,0) {Board $B7$};
\end{tikzpicture}\vspace{.5cm}

\begin{tikzpicture} [hexa/.style= {shape=regular polygon,
                                   regular polygon sides=6,
                                   minimum size=1cm, draw,
                                   inner sep=0,anchor=south,}]

    \foreach \j in {0,...,2}{%
     \ifodd\j 
         \foreach \i in {0,1,2}{\node[hexa] at ({\j/2+\j/4},{(\i+1/2)*sin(60)}) {};}        
    \else
         \foreach \i in {1,2}{\node[hexa] at ({\j/2+\j/4},{\i*sin(60)}) {};}
    \fi}
    \draw[fill=lightgray] (.75,{(1)*sin(60)}) circle (9pt);
    \draw[fill=black] (0,{(2+1/2)*sin(60)}) circle (9pt);
    \draw[fill=lightgray] (1.5,{(1+1/2)*sin(60)}) circle (9pt);
    \draw[fill=black] (.75,{(2)*sin(60)}) circle (9pt);
    \node at (.75,0) {Board $B8$};
\end{tikzpicture}\hspace{.5cm}
\begin{tikzpicture} [hexa/.style= {shape=regular polygon,
                                   regular polygon sides=6,
                                   minimum size=1cm, draw,
                                   inner sep=0,anchor=south,}]

    \foreach \j in {0,...,2}{%
     \ifodd\j 
         \foreach \i in {0,1,2}{\node[hexa] at ({\j/2+\j/4},{(\i+1/2)*sin(60)}) {};}        
    \else
         \foreach \i in {1,2}{\node[hexa] at ({\j/2+\j/4},{\i*sin(60)}) {};}
    \fi}
    \draw[fill=lightgray] (.75,{(1)*sin(60)}) circle (9pt);
    \draw[fill=black] (0,{(2+1/2)*sin(60)}) circle (9pt);
    \draw[fill=lightgray] (1.5,{(2+1/2)*sin(60)}) circle (9pt);
    \draw[fill=black] (.75,{(2)*sin(60)}) circle (9pt);
    \node at (.75,0) {Board $B9$};
\end{tikzpicture}\hspace{.5cm}
\begin{tikzpicture} [hexa/.style= {shape=regular polygon,
                                   regular polygon sides=6,
                                   minimum size=1cm, draw,
                                   inner sep=0,anchor=south,}]

    \foreach \j in {0,...,2}{%
     \ifodd\j 
         \foreach \i in {0,1,2}{\node[hexa] at ({\j/2+\j/4},{(\i+1/2)*sin(60)}) {};}        
    \else
         \foreach \i in {1,2}{\node[hexa] at ({\j/2+\j/4},{\i*sin(60)}) {};}
    \fi}
    \draw[fill=lightgray] (0,{(1+1/2)*sin(60)}) circle (9pt);
    \draw[fill=black] (0,{(2+1/2)*sin(60)}) circle (9pt);
    \draw[fill=lightgray] (1.5,{(2+1/2)*sin(60)}) circle (9pt);
    \draw[fill=black] (.75,{(2)*sin(60)}) circle (9pt);
    \node at (.75,0) {Board $B10$};
\end{tikzpicture}\hspace{.5cm}
\begin{tikzpicture} [hexa/.style= {shape=regular polygon,
                                   regular polygon sides=6,
                                   minimum size=1cm, draw,
                                   inner sep=0,anchor=south,}]

    \foreach \j in {0,...,2}{%
     \ifodd\j 
         \foreach \i in {0,1,2}{\node[hexa] at ({\j/2+\j/4},{(\i+1/2)*sin(60)}) {};}        
    \else
         \foreach \i in {1,2}{\node[hexa] at ({\j/2+\j/4},{\i*sin(60)}) {};}
    \fi}
    \draw[fill=lightgray] (0,{(1+1/2)*sin(60)}) circle (9pt);
    \draw[fill=black] (0,{(2+1/2)*sin(60)}) circle (9pt);
    \draw[fill=lightgray] (.75,{(1)*sin(60)}) circle (9pt);
    \draw[fill=black] (.75,{(2)*sin(60)}) circle (9pt);
    \node at (.75,0) {Board $B11$};
\end{tikzpicture}\vspace{.5cm}

\begin{tikzpicture} [hexa/.style= {shape=regular polygon,
                                   regular polygon sides=6,
                                   minimum size=1cm, draw,
                                   inner sep=0,anchor=south,}]

    \foreach \j in {0,...,2}{%
     \ifodd\j 
         \foreach \i in {0,1,2}{\node[hexa] at ({\j/2+\j/4},{(\i+1/2)*sin(60)}) {};}        
    \else
         \foreach \i in {1,2}{\node[hexa] at ({\j/2+\j/4},{\i*sin(60)}) {};}
    \fi}
    \draw[fill=lightgray] (0,{(1+1/2)*sin(60)}) circle (9pt);
    \draw[fill=black] (0,{(2+1/2)*sin(60)}) circle (9pt);
    \draw[fill=lightgray] (1.5,{(1+1/2)*sin(60)}) circle (9pt);
    \draw[fill=black] (.75,{(2)*sin(60)}) circle (9pt);
    \node at (.75,0) {Board $B12$};
\end{tikzpicture}\hspace{.5cm}
\begin{tikzpicture} [hexa/.style= {shape=regular polygon,
                                   regular polygon sides=6,
                                   minimum size=1cm, draw,
                                   inner sep=0,anchor=south,}]

    \foreach \j in {0,...,2}{%
     \ifodd\j 
         \foreach \i in {0,1,2}{\node[hexa] at ({\j/2+\j/4},{(\i+1/2)*sin(60)}) {};}        
    \else
         \foreach \i in {1,2}{\node[hexa] at ({\j/2+\j/4},{\i*sin(60)}) {};}
    \fi}
    \draw[fill=lightgray] (0,{(1+1/2)*sin(60)}) circle (9pt);
    \draw[fill=black] (0,{(2+1/2)*sin(60)}) circle (9pt);
    \draw[fill=lightgray] (.75,{(3)*sin(60)}) circle (9pt);
    \draw[fill=black] (.75,{(2)*sin(60)}) circle (9pt);
    \node at (.75,0) {Board $B13$};
\end{tikzpicture}\hspace{.5cm}
\end{figure}
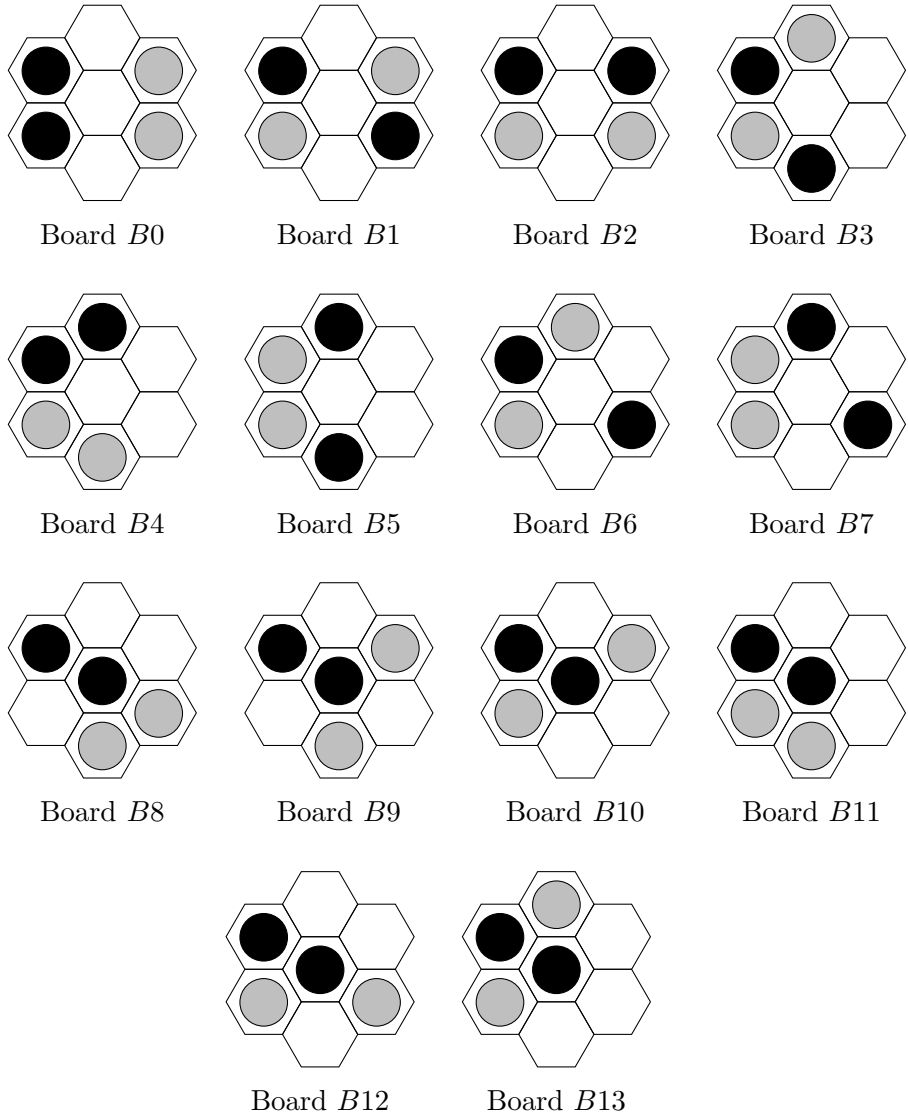

On the $2\times2\times2$ Abalone board, there are seven spaces for the four marbles to occupy.  There are exactly 23 nonisomorphic boards in $2\times 2\times 2$ Abalone. Figure \ref{fig:2x2x2boards} shows 14 of these nonisomorphic constellations.  Notice that boards $B0$-$B4$ are all self-negative and boards $B5$-$B13$ are not.  These nonisomorphic negatives are exactly the 9 nonisomorphic boards that we have excluded from Figure \ref{fig:2x2x2boards} for brevity.

The next two lemmas are easily seen and thus presented without proof.  Notice that Lemma~\ref{lem:negs} is true by a strategy-swapping argument and Lemma~\ref{lem:iso} is obvious by the definition of isomorphism.
\begin{lemma}
\label{lem:negs}
Let $C$ be a constellation.  Then $o(C)=\LL$ if and only if $o(-C)=\R$.  Similarly, $o(C)=\Nhat$ if and only if $o(-C)=\Ncheck$.
\end{lemma}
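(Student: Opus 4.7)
The plan is to exploit the symmetry between Black's role in $C$ and Gray's role in $-C$ through a careful strategy-swapping argument. The key observation is that the negation map $C \mapsto -C$ induces a natural bijection between the game trees rooted at $C$ and at $-C$: each Black move available in a position $D$ reachable from $C$ corresponds to a Gray move available in $-D$ reachable from $-C$ (and vice versa), with the resulting positions being negatives of one another. Moreover, a terminal position in which Black has pushed six Gray marbles off in the $C$-tree corresponds, under negation, to a terminal position in which Gray has pushed six Black marbles off in the $-C$-tree; and a drawn line of play corresponds to a drawn line of play, since ``draw'' is symmetric between the players.

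For the first equivalence, I would assume $o(C) = \LL$, so Black has a winning strategy $\sigma$ from $C$ regardless of who moves next. I would define a Gray strategy $\sigma'$ from $-C$ as follows: at any position $-D$ reached along play from $-C$, where $D$ is the corresponding position reached along the mirrored play from $C$, Gray plays in $-D$ the move whose negation is the move prescribed by $\sigma$ to Black in $D$. Against any Black strategy in $-C$, the resulting line of play corresponds under the bijection to a line of play from $C$ in which Black uses $\sigma$ against some Gray strategy; since $\sigma$ wins for Black in $C$, the mirrored play wins for Gray in $-C$. Hence $o(-C) = \R$. The reverse direction is immediate from $-(-C) = C$.

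For the second equivalence, suppose $o(C) = \Nhat$, so Black wins from $C$ when Black moves first and the game is drawn from $C$ when Gray moves first. If Gray moves first in $-C$, this corresponds under the bijection to Black moving first in $C$, which yields a Black win in $C$ and therefore, by the correspondence of terminal outcomes, a Gray win in $-C$. If Black moves first in $-C$, this corresponds to Gray moving first in $C$, which is a draw and hence a draw in $-C$ as well. Thus $o(-C) = \Ncheck$, and the converse follows from $-(-C) = C$.

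The main thing to get right is the formal setup of the bijection between game trees and the observation that the three possible terminal outcomes (Black-win, Gray-win, draw) are permuted correctly under negation, with ``draw'' being fixed; once this is in place, each outcome-class claim is just a direct translation of the corresponding statement about $C$.
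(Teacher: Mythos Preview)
Your proposal is correct and matches the paper's approach: the paper presents Lemma~\ref{lem:negs} without proof, remarking only that it ``is true by a strategy-swapping argument,'' which is exactly what you have carried out in detail. Your write-up is in fact more careful than the paper's one-line justification, explicitly setting up the bijection of game trees and checking that the terminal outcomes (Black-win, Gray-win, draw) transform as required under negation.
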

%
%

\begin{lemma}
\label{lem:iso}
If $C$ and $C'$ are isomorphic constellations, then $o(C)=o(C')$.
\end{lemma}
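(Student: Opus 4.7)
The plan is to leverage the fact that the isomorphisms in play are precisely the twelve symmetries of the regular hexagon, i.e., the dihedral group generated by the six reflections over the axes of symmetry of the board and the rotation by $60^\circ$. Each such symmetry is a rigid motion of the hexagonal grid, so it permutes the spaces of the board while preserving adjacency, collinearity, direction vectors (up to a global permutation of the six hex directions), and the boundary of the board.

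First, I would argue that any board isomorphism $\phi$ preserves the structural ingredients that define a legal Abalone move: a move is specified by a contiguous line of $1$, $2$, or $3$ same-colored marbles together with a direction, and its legality depends only on adjacency, collinearity, the lengths of the friendly and opposing lines involved in a potential \emph{sumito}, and whether the move would eject an opposing marble over the boundary. Because $\phi$ preserves all of these data, a move is legal at $C$ if and only if the corresponding transformed move is legal at $\phi(C)$. This yields a color-preserving bijection between $C^L$ and $\phi(C)^L$, and similarly between $C^R$ and $\phi(C)^R$, that commutes with $\phi$: if $C \to D$ is a legal Black (resp.\ Gray) move then $\phi(C) \to \phi(D)$ is the matching legal Black (resp.\ Gray) move. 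Moreover, $\phi$ takes a terminal position (one in which a player has lost a marble) to a terminal position with the same loser.

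Next, I would conclude that the full game trees rooted at $C$ and at $\phi(C) = C'$ are isomorphic as labeled bipartite trees, where the labels record whose turn it is and whether a terminal has been reached. Since the outcome class $o(\cdot)$ is determined entirely by this labeled tree structure (both players playing optimally, with draws arising from infinite play or from mutual preference for non-loss), any strategy for either player from $C$ transports along $\phi$ to a strategy from $C'$ achieving the same result, and vice versa. Hence $o(C) = o(C')$.

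The only subtlety I see is handling draws carefully, since games may be infinite: infinite play at $C$ corresponds under $\phi$ to infinite play at $C'$, so the draw outcomes match as well. This is really the only place where more care than the author's appeal to ``obvious'' is needed, and it is handled by noting that $\phi$ induces a bijection on the set of all (possibly infinite) plays preserving the winner, if any. Thus the outcome class, which is a function only of the set of plays and their winners under optimal strategies, is preserved.
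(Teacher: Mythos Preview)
Your argument is correct and is exactly the natural elaboration of what the paper intends: the paper presents this lemma without proof, remarking only that it is ``obvious by the definition of isomorphism,'' and your game-tree isomorphism argument is the standard way to make that remark precise. There is nothing to compare against beyond noting that you have supplied the details the authors chose to omit.
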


We are now ready to strongly solve $2\times2\times2$ Abalone.
\begin{theorem}
    $2\times 2\times 2$ Abalone is strongly solved.  An exhaustive list of its constellations' outcome classes is provided in Table~\ref{table:allbds}.
\end{theorem}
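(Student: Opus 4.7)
The plan is to establish the theorem by a finite backward-induction (retrograde) analysis of the game graph of $2\times 2\times 2$ Abalone, using Lemmas~\ref{lem:negs} and~\ref{lem:iso} to cut the caseload in half. Since only $23$ nonisomorphic constellations with four marbles exist, and since every move from such a constellation either produces another four-marble constellation or immediately ends the game by ejecting an opposing marble, the state space is small enough to handle exhaustively.

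The first step is to compute, for each of the $23$ nonisomorphic constellations $C$, the option sets $C^L$ and $C^R$, distinguishing moves that produce another four-marble position (to be canonicalized under the $12$-element dihedral symmetry group of the hexagon) from moves whose sumito ejects an opposing marble and therefore ends the game. I would organize this as a directed graph on $23$ vertices with edges colored by player, together with a boolean flag at each vertex for each player recording whether an ejecting option is available.

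With that data in hand, the outcome class of each constellation follows from a standard fixed-point procedure over the $46$ (constellation, player-to-move) states. Each state will be labeled W (the player to move wins), L (the player to move loses), or D (draw). Any state whose player-to-move has an ejecting option is immediately W. Then iterate: a state is W if some option of its player-to-move goes to an L state, and L if every option goes to a W state. When the iteration stabilizes, the remaining unlabeled states are D, reflecting that neither player can force a win against optimal play and unbounded play is permitted. Pairing the labels at $(C,\text{Black})$ and $(C,\text{Gray})$ then determines $o(C)$ within the six-class taxonomy of Section~\ref{subsec:cgt}, and Lemma~\ref{lem:negs} automatically yields $o(-C)$ for each of the non-self-negative boards $B5$--$B13$.

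The main obstacle is bookkeeping rather than insight. The delicate points are enumerating sumito options correctly (distinguishing lines that can push from lines that cannot, and catching the pushes that eject a marble off the small board), computing canonical forms correctly under the hexagonal symmetry group, and tracking the two-subgame fixed-point iteration to convergence. I would carry out the enumeration by hand for the sparsest constellations such as $B0$--$B4$ and with a short verified script for the remainder, using Lemma~\ref{lem:negs} as an internal consistency check by confirming symmetric labels on each non-self-negative board and its negative. The resulting outcome data populates Table~\ref{table:allbds} and, together with the pointers to winning moves produced by the iteration, supplies the optimal play sequences required for a strong solution.
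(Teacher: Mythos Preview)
Your proposal is correct: retrograde analysis via a fixed-point iteration on the $46$ (constellation, player-to-move) states is the standard and sound way to solve a finite game with draws, and it will reproduce Table~\ref{table:allbds}. The paper, however, takes a more hand-crafted route. Rather than run a generic W/L/D iteration, it first identifies $B11$ directly as the unique anchor position in $\LL$ (Black can push or pass-by-isomorphism; Gray's only move walks into a push), then propagates outward by inspection: $B0,B4\in\N$ because the first mover can reach $\pm B11$, and from there $B5,B7,B8,B12$ are first-mover wins for Black. With those positions identified as ``irrational'' destinations for the opponent, the paper prunes the option lists of the remaining eleven boards $S=\{B1,B2,B3,\pm B6,\pm B9,\pm B10,\pm B13\}$ and observes that $S$ is closed under the surviving options, so every board in $S$ is $\D$; the $\Nhat/\Ncheck$ labels for $\pm B5,\pm B7,\pm B8,\pm B12$ then follow because their second-mover options land in $S$. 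Your algorithmic version is more systematic and less error-prone, and it naturally outputs the optimal-play pointers; the paper's version is more narrative and makes the structure of the game---one anchor win, a closed draw cycle, and a few $\Nhat/\Ncheck$ positions on its boundary---visible without any scripting.
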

\begin{proof}
    As there are 23 nonisomorphic boards possible in $2\times 2\times 2$ Abalone, we will determine the outcome class of all 23 constellations, assuming the players are perfectly rational and omniscient.  In doing so, we will also provide optimal sequences of moves from each starting constellation.
    
    To begin, notice that $B11$ is a clear win for Black. If Gray moves first, they must move to $B12$. This endangers their marble by moving it into the way of Black's two marbles, so Black can respond by pushing Gray off and winning. If Black moves first at $B11$, they can simply move both of their marbles across the middle and maintain an isomorphic board. Thus, $o(B11)=\LL$.  By Lemma \ref{lem:negs}, we may also conclude that $o(-B11)=\R$.
    
    Next, notice that the first player to move in $B0$ can immediately force the game into $B11$ or $-B11$ (whichever benefits them) by moving two of their marbles into the middle column. As a result, we can conclude that $o(B0)=\N$.  By a similar argument, we can also conclude that $o(B4)=\N$.
    
    Based on the outcomes for boards $B0$, $B4$, and $\pm B11$, we can conclude that Black will win if they play first on $B5$, $B7$, $B8$, or $B12$.  Similarly, Gray will win if they play first on $-B5$, $-B7$, $-B8$, or $-B12$. So if both players are rational, Black will never opt to move to $B0$, $B4$, $-B5$, $-B7$, $-B8$, $-B11$, or $-B12$, and Gray will never opt to move to $B0$, $B4$, $B5$, $B7$, $B8$, $B11$, or $B12$.  
    
    Now consider the boards $B1$, $B2$, $B3$, $\pm B6$, $\pm B9$, $\pm B10$, and $\pm B13$.  The Black (Left) and Gray (Right) options for these 11 boards are listed below.  Notice that the irrational moves mentioned earlier have been pruned.  As a result, moves to $\pm B12$ have been been reduced in the following way: If, for example, Black moves to board $B12$, then Gray's options are to move to board $B8$, $B10$, or $B11$.  This means Gray's only logical response is to move to board $B10$.  From $B10$, Black has the option to go to board $B1$, $-B5$, $-B6$, or $B10$.  After pruning the illogical option of moving to $-B5$, we get that an option for Black to move to $B12$ reduces to the option for Black to move to $B1$, $-B6$ or $B10$.  By a similar argument, Gray's option to move to $-B12$ reduces to the option for Gray to move to $B1$, $B6$, or $-B10$.
    \begin{align*}
        B1 &= \{-B6, B10 ~|~ B6, -B10\} \\
        B2 &= \{-B7, B12 ~|~ B7, -B12\} \\
           &= \{B1, -B6, B10 ~|~ B1, B6, -B10\} \\
        B3 &= \{B6, B12, B13 ~|~ -B6, -B12, -B13\} \\
           &= \{B1, \pm B6, B10, B13 ~|~ B1, \pm B6, -B10, -B13\}\\
        B6 &= \{B3, B9, B13 ~|~ B1, -B10\} \\
        -B6 &= \{B1, B10 ~|~ B3, -B9, -B13\} \\
        B9 &= \{B6, -B7, B12, B13 ~|~ B8, B10\} \\
           &= \{B1, \pm B6, B10, B13 ~|~ B10\} \\
        -B9 &= \{-B8, -B10 ~|~ -B6, B7, -B12, -B13\} \\
            &= \{-B10 ~|~ B1, \pm B6, -B10, -B13\} \\
        B10 &= \{B1, -B5, -B6, B10 ~|~ B9, B12, B13\} \\
            &= \{B1, -B6, B10 ~|~ B9, B13\} \\
        -B10 &= \{-B9, -B12, -B13 ~|~ B1, B5, B6, -B10\} \\
             &= \{-B9, -B13 ~|~ B1, B6, -B10\} \\
        B13 &= \{B3, B6, B9 ~|~ B10\} \\
        -B13 &= \{-B10 ~|~ B3, -B6, -B9\}
    \end{align*}
    Based on these options, we may conclude that any optimal sequence of plays starting with a board in the set
    \[ S=\{B1, B2, B3, \pm B6, \pm B9, \pm B10, \pm B13\}\]
    will inevitably return to a board in $S$.  Thus all optimal runs starting in $S$ are infinite, meaning $o(C) = \D$ for all $C\in S$.
    
    Finally, let's consider the boards $\pm B5$, $\pm B7$, $\pm B8$, and $\pm B12$.  Recall that Black can win if they play first on $B5$, $B7$, $B8$, and $B12$.  The Gray (Right) options for each of these boards are listed below.  Notice that these options lists have also been reduced and pruned to avoid illogical moves.
    \begin{align*}
        B5^R &= \{ -B10\} \\
        B7^R &= \{B2, -B9, -B12 \} = \{B1, B2, B6, -B9, -B10\} \\
        B8^R &= \{B9, B12\} = \{B9\} \\
        B12^R &= \{B8, B10, B11\} = \{B10\}
    \end{align*}
    Since all right options are in outcome class $\D$, \[o(B5) = o(B7) = o(B8) = o(B12) = \Nhat,\] and by Lemma \ref{lem:negs}, \[o(-B5) = o(-B7)= o(-B8) = o(-B12) = \Ncheck.\] 
    
    We have now determined the outcome class of all 23 nonisomorphic $2\times 2\times 2$ Abalone boards.  By Lemma \ref{lem:iso}, we therefore know the outcome class of every possible constellation and the game is strongly solved.  For convenience, all outcomes are summarized in Table \ref{table:allbds}.
\end{proof}

    \begin{table}[ht!]
    \centering
    \caption{Outcomes for all nonisomorphic $2\times 2\times 2$ Abalone boards.}
    \label{table:allbds}
    {\renewcommand{\arraystretch}{1.5}
    \begin{tabular}{|c|c|c|c|c|c|} 
    \hline
     & $B0$ & $B1$ & $B2$ & $B3$ & $B4$ \\ 
    \hline
    $o(C)$ & $\N$ & $\D$ & $\D$ & $\D$ & $\N$ \\ 
    \hline
    \end{tabular}}
    
    \vspace{.5cm}
    
    {\renewcommand{\arraystretch}{1.5}
    \begin{tabular}{|c|c|c|c|c|c|c|c|c|c|c|c|c|c|} 
    \hline
     & $B5$ & $B6$ & $B7$ & $B8$ & $B9$ & $B10$ & $B11$ & $B12$ & $B13$ \\ 
    \hline
    $o(C)$ & $\Nhat$ & $\D$ & $\Nhat$ & $\Nhat$ & $\D$ & $\D$ & $\LL$ & $\Nhat$ & $\D$ \\ 
    \hline
    $o(-C)$ & $\Ncheck$ & $\D$ & $\Ncheck$ & $\Ncheck$ & $\D$ & $\D$ & $\R$ & $\Ncheck$ & $\D$ \\ 
    \hline
    \end{tabular}}
    \end{table}

\section{A weak solution for 2x2x3 Abalone}
\label{sec:2x2x3}
{$2\times 2\times 3$} Abalone is a variant of Abalone that is played on a non-regular $2\times2\times3$ hexagonal grid with 10 spaces. Each player begins with 3 marbles and must push off one of their opponent's marbles to win. The starting configuration is a simplified version of the standard setup and is labeled $C0$ in Figure~\ref{fig:2x2x3optimalplayboards}. Note that the irregular board shape and the fact that each player has an odd number of pieces prevents there from being a Daisy-type constellation on this board.

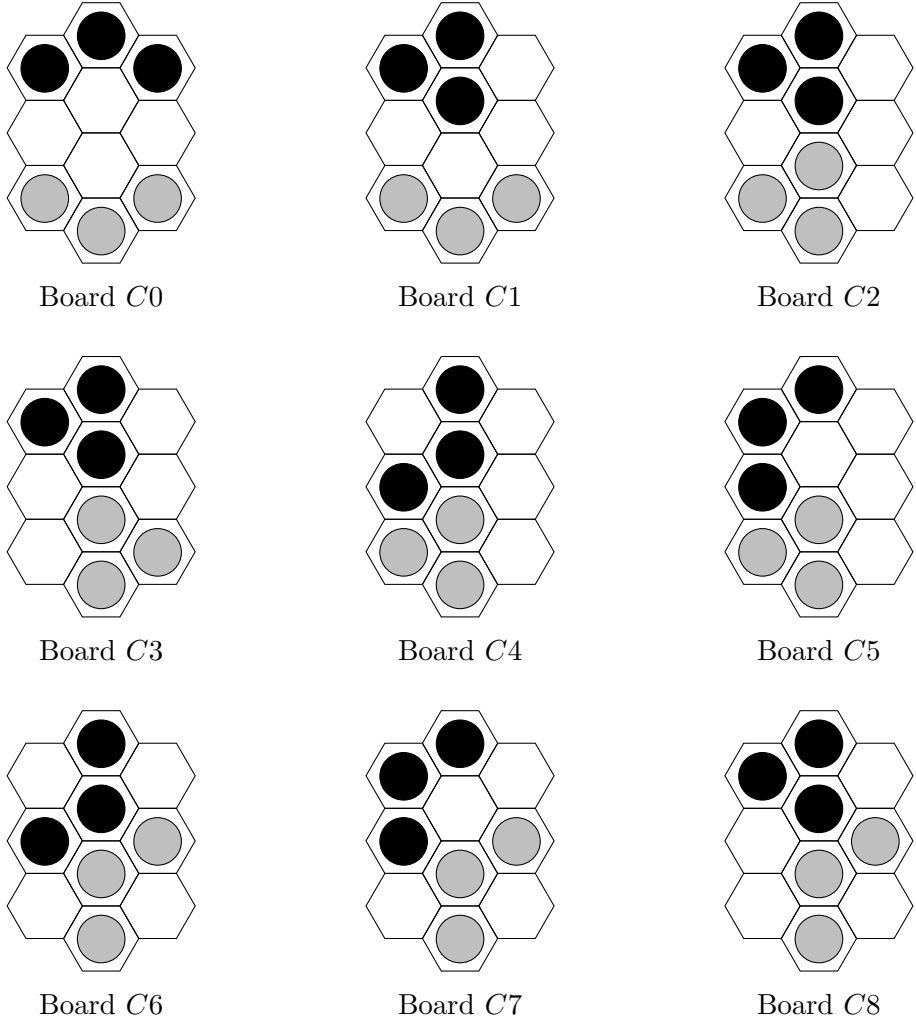
\begin{figure}[ht!]
\caption{Constellations that may occur during optimal $2\times 2\times 3$ Abalone play.}
\label{fig:2x2x3optimalplayboards}
\centering
\vspace{.5cm}
    \begin{tikzpicture} [hexa/.style= {shape=regular polygon,
                                   regular polygon sides=6,
                                   minimum size=1cm, draw,
                                   inner sep=0,anchor=south,}]
    \foreach \j in {0,...,2}{%
     \ifodd\j 
         \foreach \i in {0,...,3}{\node[hexa] at ({\j/2+\j/4},{(\i+1/2)*sin(60)}) {};}        
    \else
         \foreach \i in {1,...,3}{\node[hexa] at ({\j/2+\j/4},{\i*sin(60)}) {};}
    \fi}
    \draw[fill=black] (0,{(3+1/2)*sin(60)}) circle (9pt);
    \draw[fill=black] (.75,{(4)*sin(60)}) circle (9pt);
    \draw[fill=black] (1.5,{(3.5)*sin(60)}) circle (9pt);
    \draw[fill=lightgray] (0,{(1+1/2)*sin(60)}) circle (9pt);
    \draw[fill=lightgray] (.75,{(1)*sin(60)}) circle (9pt);
    \draw[fill=lightgray] (1.5,{(1+1/2)*sin(60)}) circle (9pt);
    \node at (.75,0) {Board $C0$};
\end{tikzpicture}
\hspace{2cm}
    \begin{tikzpicture} [hexa/.style= {shape=regular polygon,
                                   regular polygon sides=6,
                                   minimum size=1cm, draw,
                                   inner sep=0,anchor=south,}]
    \foreach \j in {0,...,2}{%
     \ifodd\j 
         \foreach \i in {0,...,3}{\node[hexa] at ({\j/2+\j/4},{(\i+1/2)*sin(60)}) {};}        
    \else
         \foreach \i in {1,...,3}{\node[hexa] at ({\j/2+\j/4},{\i*sin(60)}) {};}
    \fi}
    \draw[fill=black] (0,{(3+1/2)*sin(60)}) circle (9pt);
    \draw[fill=black] (.75,{(3)*sin(60)}) circle (9pt);
    \draw[fill=black] (.75,{(4)*sin(60)}) circle (9pt);
    \draw[fill=lightgray] (0,{(1+1/2)*sin(60)}) circle (9pt);
    \draw[fill=lightgray] (.75,{(1)*sin(60)}) circle (9pt);
    \draw[fill=lightgray] (1.5,{(1+1/2)*sin(60)}) circle (9pt);
    \node at (.75,0) {Board $C1$};
\end{tikzpicture}
\hspace{2cm}
    \begin{tikzpicture} [hexa/.style= {shape=regular polygon,
                                   regular polygon sides=6,
                                   minimum size=1cm, draw,
                                   inner sep=0,anchor=south,}]
    \foreach \j in {0,...,2}{%
     \ifodd\j 
         \foreach \i in {0,...,3}{\node[hexa] at ({\j/2+\j/4},{(\i+1/2)*sin(60)}) {};}        
    \else
         \foreach \i in {1,...,3}{\node[hexa] at ({\j/2+\j/4},{\i*sin(60)}) {};}
    \fi}
    \draw[fill=black] (0,{(3+1/2)*sin(60)}) circle (9pt);
    \draw[fill=black] (.75,{(3)*sin(60)}) circle (9pt);
    \draw[fill=black] (.75,{(4)*sin(60)}) circle (9pt);
    \draw[fill=lightgray] (0,{(1+1/2)*sin(60)}) circle (9pt);
    \draw[fill=lightgray] (.75,{(1)*sin(60)}) circle (9pt);
    \draw[fill=lightgray] (.75,{(2)*sin(60)}) circle (9pt);
    \node at (.75,0) {Board $C2$};
\end{tikzpicture}\\
\vspace{.5cm}
    \begin{tikzpicture} [hexa/.style= {shape=regular polygon,
                                   regular polygon sides=6,
                                   minimum size=1cm, draw,
                                   inner sep=0,anchor=south,}]
    \foreach \j in {0,...,2}{%
     \ifodd\j 
         \foreach \i in {0,...,3}{\node[hexa] at ({\j/2+\j/4},{(\i+1/2)*sin(60)}) {};}        
    \else
         \foreach \i in {1,...,3}{\node[hexa] at ({\j/2+\j/4},{\i*sin(60)}) {};}
    \fi}
    \draw[fill=black] (0,{(3+1/2)*sin(60)}) circle (9pt);
    \draw[fill=black] (.75,{(3)*sin(60)}) circle (9pt);
    \draw[fill=black] (.75,{(4)*sin(60)}) circle (9pt);
    \draw[fill=lightgray] (.75,{(2)*sin(60)}) circle (9pt);
    \draw[fill=lightgray] (.75,{(1)*sin(60)}) circle (9pt);
    \draw[fill=lightgray] (1.5,{(1+1/2)*sin(60)}) circle (9pt);
    \node at (.75,0) {Board $C3$};
\end{tikzpicture}
\hspace{2cm}
    \begin{tikzpicture} [hexa/.style= {shape=regular polygon,
                                   regular polygon sides=6,
                                   minimum size=1cm, draw,
                                   inner sep=0,anchor=south,}]
    \foreach \j in {0,...,2}{%
     \ifodd\j 
         \foreach \i in {0,...,3}{\node[hexa] at ({\j/2+\j/4},{(\i+1/2)*sin(60)}) {};}        
    \else
         \foreach \i in {1,...,3}{\node[hexa] at ({\j/2+\j/4},{\i*sin(60)}) {};}
    \fi}
    \draw[fill=black] (0,{(2+1/2)*sin(60)}) circle (9pt);
    \draw[fill=black] (.75,{(3)*sin(60)}) circle (9pt);
    \draw[fill=black] (.75,{(4)*sin(60)}) circle (9pt);
    \draw[fill=lightgray] (0,{(1+1/2)*sin(60)}) circle (9pt);
    \draw[fill=lightgray] (.75,{(1)*sin(60)}) circle (9pt);
    \draw[fill=lightgray] (.75,{(2)*sin(60)}) circle (9pt);
    \node at (.75,0) {Board $C4$};
\end{tikzpicture}
\hspace{2cm}
    \begin{tikzpicture} [hexa/.style= {shape=regular polygon,
                                   regular polygon sides=6,
                                   minimum size=1cm, draw,
                                   inner sep=0,anchor=south,}]
    \foreach \j in {0,...,2}{%
     \ifodd\j 
         \foreach \i in {0,...,3}{\node[hexa] at ({\j/2+\j/4},{(\i+1/2)*sin(60)}) {};}        
    \else
         \foreach \i in {1,...,3}{\node[hexa] at ({\j/2+\j/4},{\i*sin(60)}) {};}
    \fi}
    \draw[fill=black] (0,{(3+1/2)*sin(60)}) circle (9pt);
    \draw[fill=black] (0,{(2.5)*sin(60)}) circle (9pt);
    \draw[fill=black] (.75,{(4)*sin(60)}) circle (9pt);
    \draw[fill=lightgray] (0,{(1+1/2)*sin(60)}) circle (9pt);
    \draw[fill=lightgray] (.75,{(1)*sin(60)}) circle (9pt);
    \draw[fill=lightgray] (.75,{(2)*sin(60)}) circle (9pt);
    \node at (.75,0) {Board $C5$};
\end{tikzpicture}\\
\vspace{.5cm}
    \begin{tikzpicture} [hexa/.style= {shape=regular polygon,
                                   regular polygon sides=6,
                                   minimum size=1cm, draw,
                                   inner sep=0,anchor=south,}]
    \foreach \j in {0,...,2}{%
     \ifodd\j 
         \foreach \i in {0,...,3}{\node[hexa] at ({\j/2+\j/4},{(\i+1/2)*sin(60)}) {};}        
    \else
         \foreach \i in {1,...,3}{\node[hexa] at ({\j/2+\j/4},{\i*sin(60)}) {};}
    \fi}
    \draw[fill=black] (0,{(2+1/2)*sin(60)}) circle (9pt);
    \draw[fill=black] (.75,{(3)*sin(60)}) circle (9pt);
    \draw[fill=black] (.75,{(4)*sin(60)}) circle (9pt);
    \draw[fill=lightgray] (.75,{(2)*sin(60)}) circle (9pt);
    \draw[fill=lightgray] (.75,{(1)*sin(60)}) circle (9pt);
    \draw[fill=lightgray] (1.5,{(2+1/2)*sin(60)}) circle (9pt);
    \node at (.75,0) {Board $C6$};
\end{tikzpicture}
\hspace{2cm}
    \begin{tikzpicture} [hexa/.style= {shape=regular polygon,
                                   regular polygon sides=6,
                                   minimum size=1cm, draw,
                                   inner sep=0,anchor=south,}]
    \foreach \j in {0,...,2}{%
     \ifodd\j 
         \foreach \i in {0,...,3}{\node[hexa] at ({\j/2+\j/4},{(\i+1/2)*sin(60)}) {};}        
    \else
         \foreach \i in {1,...,3}{\node[hexa] at ({\j/2+\j/4},{\i*sin(60)}) {};}
    \fi}
    \draw[fill=black] (0,{(2+1/2)*sin(60)}) circle (9pt);
    \draw[fill=black] (0,{(3.5)*sin(60)}) circle (9pt);
    \draw[fill=black] (.75,{(4)*sin(60)}) circle (9pt);
    \draw[fill=lightgray] (1.5,{(2+1/2)*sin(60)}) circle (9pt);
    \draw[fill=lightgray] (.75,{(1)*sin(60)}) circle (9pt);
    \draw[fill=lightgray] (.75,{(2)*sin(60)}) circle (9pt);
    \node at (.75,0) {Board $C7$};
\end{tikzpicture}
\hspace{2cm}
    \begin{tikzpicture} [hexa/.style= {shape=regular polygon,
                                   regular polygon sides=6,
                                   minimum size=1cm, draw,
                                   inner sep=0,anchor=south,}]
    \foreach \j in {0,...,2}{%
     \ifodd\j 
         \foreach \i in {0,...,3}{\node[hexa] at ({\j/2+\j/4},{(\i+1/2)*sin(60)}) {};}        
    \else
         \foreach \i in {1,...,3}{\node[hexa] at ({\j/2+\j/4},{\i*sin(60)}) {};}
    \fi}
    \draw[fill=black] (0,{(3+1/2)*sin(60)}) circle (9pt);
    \draw[fill=black] (.75,{(3)*sin(60)}) circle (9pt);
    \draw[fill=black] (.75,{(4)*sin(60)}) circle (9pt);
    \draw[fill=lightgray] (1.5,{(2+1/2)*sin(60)}) circle (9pt);
    \draw[fill=lightgray] (.75,{(1)*sin(60)}) circle (9pt);
    \draw[fill=lightgray] (.75,{(2)*sin(60)}) circle (9pt);
    \node at (.75,0) {Board $C8$};
\end{tikzpicture}
\end{figure}

Using the SageMath computer algebra system, we wrote a program which found over 555 nonisomorphic $2\times2\times3$ Abalone boards.  This large number of constellations makes it arduous to strongly solve $2\times2\times3$ Abalone by hand, so we instead focus on a weak solution.  In doing so, we will show that there are 13 non-isomorphic boards that may occur during an optimal game of $2\times 2\times 3$ Abalone.  Of these boards, 9 are depicted in Figure~\ref{fig:2x2x3optimalplayboards}.  The other 4 are $-C4$, $-C5$, $-C7$, and $-C8$. 

We begin by proving Lemma~\ref{lem:223middle}, which determines the outcome classes of boards in which a single player controls both middle positions.  In particular, it shows that a player who is able to create a central triangle subconfiguration is able to force a win.  Thus, during optimal play, both players must prevent their opponent from creating this triangle formation.

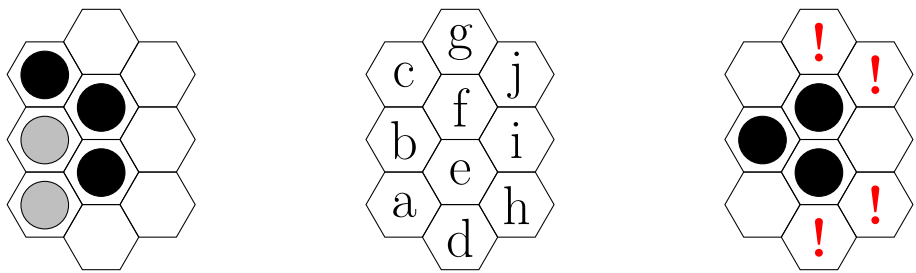
\begin{figure}[ht!]
\caption{A neutral subconfiguration (left), a labeled guide to reference spaces (center), and a Black triangle subconfiguration (right). }
\label{fig:2x2x3neutral}
\centering
\vspace{.5cm}
    \begin{tikzpicture} [hexa/.style= {shape=regular polygon,
                                   regular polygon sides=6,
                                   minimum size=1cm, draw,
                                   inner sep=0,anchor=south,}]
    \foreach \j in {0,...,2}{%
     \ifodd\j 
         \foreach \i in {0,...,3}{\node[hexa] at ({\j/2+\j/4},{(\i+1/2)*sin(60)}) {};}        
    \else
         \foreach \i in {1,...,3}{\node[hexa] at ({\j/2+\j/4},{\i*sin(60)}) {};}
    \fi}
    \draw[fill=black] (0,{(3+1/2)*sin(60)}) circle (9pt);
    \draw[fill=black] (.75,{(2)*sin(60)}) circle (9pt);
    \draw[fill=black] (.75,{(3)*sin(60)}) circle (9pt);
    \draw[fill=lightgray] (0,{(1+1/2)*sin(60)}) circle (9pt);
    \draw[fill=lightgray] (0,{(2+1/2)*sin(60)}) circle (9pt);
\end{tikzpicture}
\hspace{2cm}
    \begin{tikzpicture} [hexa/.style= {shape=regular polygon,
                                   regular polygon sides=6,
                                   minimum size=1cm, draw,
                                   inner sep=0,anchor=south,}]
    \foreach \j in {0,...,2}{%
     \ifodd\j 
         \foreach \i in {0,...,3}{\node[hexa] at ({\j/2+\j/4},{(\i+1/2)*sin(60)}) {};}        
    \else
         \foreach \i in {1,...,3}{\node[hexa] at ({\j/2+\j/4},{\i*sin(60)}) {};}
    \fi}
    \node at (0,{(3+1/2)*sin(60)}) {\huge c};
    \node at (0,{(2+1/2)*sin(60)}) {\huge b};
    \node at (0,{(1+1/2)*sin(60)}) {\huge a};
    \node at (.75,{(4)*sin(60)}) {\huge g};
    \node at (.75,{(3)*sin(60)}) {\huge f};
    \node at (.75,{(2)*sin(60)}) {\huge e};
    \node at (.75,{(1)*sin(60)}) {\huge d};
    \node at (1.5,{(1+1/2)*sin(60)}) {\huge h};
    \node at (1.5,{(2+1/2)*sin(60)}) {\huge i};
    \node at (1.5,{(3+1/2)*sin(60)}) {\huge j};
\end{tikzpicture}
\hspace{2cm}
\begin{tikzpicture} [hexa/.style= {shape=regular polygon,
                                   regular polygon sides=6,
                                   minimum size=1cm, draw,
                                   inner sep=0,anchor=south,}]
    \foreach \j in {0,...,2}{%
     \ifodd\j 
         \foreach \i in {0,...,3}{\node[hexa] at ({\j/2+\j/4},{(\i+1/2)*sin(60)}) {};}        
    \else
         \foreach \i in {1,...,3}{\node[hexa] at ({\j/2+\j/4},{\i*sin(60)}) {};}
    \fi}
    \draw[fill=black] (0,{(2+1/2)*sin(60)}) circle (9pt);
    \draw[fill=black] (.75,{(2)*sin(60)}) circle (9pt);
    \draw[fill=black] (.75,{(3)*sin(60)}) circle (9pt);
    
    \node at (1.5,{(3+1/2)*sin(60)}) {{\color{red} \huge\textbf{!}}};
    \node at (.75,{(1)*sin(60)}) {{\color{red} \huge\textbf{!}}};
    \node at (1.5,{(1+1/2)*sin(60)}) {{\color{red} \huge\textbf{!}}};
    \node at (.75,{(4)*sin(60)}) {{\color{red} \huge\textbf{!}}};
\end{tikzpicture}
\end{figure}

\begin{lemma}
    Let $C$ be a $2\times2\times3$ Abalone constellation in which Black occupies both of the middle two spaces (denoted as spaces $e$ and $f$ in Figure~\ref{fig:2x2x3neutral}). Then either $o(C)=\LL$, or $C$ contains the neutral subconfiguration shown in Figure~\ref{fig:2x2x3neutral}. In the latter case, $o(C)=\N$.
    \label{lem:223middle}
\end{lemma}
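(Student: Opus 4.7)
The plan is a case analysis on the position $x$ of Black's third marble (beyond $e$ and $f$). The $2\times 2\times 3$ board has two reflection symmetries preserving the pair $\{e, f\}$---horizontal (swapping $a \leftrightarrow c$, $d \leftrightarrow g$, $h \leftrightarrow j$, $e \leftrightarrow f$) and vertical (swapping $a \leftrightarrow h$, $b \leftrightarrow i$, $c \leftrightarrow j$)---so up to isomorphism $x$ falls into three equivalence classes: the corners $\{a, c, h, j\}$, the side cells $\{b, i\}$, and the middle-column edges $\{d, g\}$. The key tactical observation is the \emph{triangle formation} $\{b, e, f\}$ (and its vertical reflection $\{e, f, i\}$): its four 2-line in-line pushes $\{e, f\}$ N/S and $\{b, e\}, \{b, f\}$ SE/NE threaten to push any Gray marble at one of $\{d, g, h, j\}$ off the board. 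Moreover, no Black marble in this triangle can be pushed off by Gray in a single move, since $e$ and $f$ are interior and both off-board directions from $b$ (namely NW and SW) would require a Gray 2-line whose near-end lies at $e$ or $f$---both Black.

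For the side case ($x = b$, triangle) I would show $o(C) = \LL$. If Gray has a marble in the threatened set $\{d, g, h, j\}$ and Black moves, Black wins by an immediate push; if Gray moves, Gray has no counter-push (by unpushability) and each legal single-marble move leaves a Gray marble at a threatened cell on the next turn---verified by short enumeration. The residual subcase is Gray at the safe set $\{a, c, i\}$. Here Gray's only single-marble moves each place a Gray marble at a threatened cell, so Gray-to-move loses immediately. For Black-to-move I would exhibit a forcing sequence: Black shifts $\{b, f\}$ NE to $\{e, f, j\}$; Gray's only non-losing reply is $i \to h$; then Black plays $e \to i$, forming the opposite triangle $\{f, i, j\}$, which simultaneously threatens Gray at $c$ (via the $\{i, f\}$ NW-push) and Gray at $h$ (via the $\{j, i\}$ S-push)---a fork Gray cannot parry.

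The middle-column edge case ($x = d$) is handled similarly: with the 3-line $\{d, e, f\}$, Black's opening move is typically the 3-line in-line shift to $\{e, f, g\}$ or the broadside to $\{h, i, j\}$, each of which forces Gray into a configuration admitting a triangle formation and fork on a subsequent turn. The corner case ($x = c$) is the crux. Here Black's marble at $c$ is itself ejectable via a Gray 2-line $\{a, b\}$ pushing N off-board. If Gray occupies both $a$ and $b$, then $C$ contains the neutral subconfiguration; I would verify $o(C) = \N$ by showing (a) Gray-to-move wins instantly via the $\{a, b\}$ N-push ejecting Black's $c$, and (b) Black-to-move wins by the in-line shift $\{c, f\}$ SE, which forms the opposite triangle $\{e, f, i\}$ (legal whenever Gray's third marble is not at $i$; when it is, the same shift instead pushes Gray $i$ off SE-ward as a 2-on-1 push, also winning), after which a forking argument analogous to the side case closes the game. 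Conversely, if Gray does not occupy both $a$ and $b$, the push threat against Black's $c$ is absent and Black can safely form a triangle or apply a direct push, giving $o(C) = \LL$. The main obstacle is this final step: the roughly dozen nonisomorphic Gray configurations avoiding the $\{a, b\}$ pattern must each be dispatched by identifying a specific Black winning move (most often an in-line shift into a triangle, occasionally a broadside or direct push), with particular care when Gray's marbles partially block the natural triangle-forming move.
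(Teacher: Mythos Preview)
Your proposal is correct and takes essentially the same approach as the paper: both identify the triangle formation $\{b,e,f\}$ (equivalently $\{e,f,i\}$) as the key winning structure, case-split on the position of Black's third marble to show Black can reach or already holds this triangle, demonstrate that the triangle yields a forcing fork, and isolate the neutral subconfiguration (Gray on $\{a,b\}$ threatening Black's corner marble at $c$) as the unique exception giving outcome $\N$. The only cosmetic differences are your organization by symmetry classes of the third marble versus the paper's organization by number-of-moves-to-reach-the-triangle, and your choice of the corner triangle $\{f,i,j\}$ for the terminal fork versus the paper's center triangle $\{e,f,i\}$.
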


\begin{proof}
Suppose $C$ is a $2\times2\times3$ Abalone constellation in which Black occupies both of the middle two spaces ($e$ and $f$).  Notice that neither of these middle marbles can be pushed out of place, because there isn't sufficient room to create a Gray line that threatens them.

We'll begin by discussing the strong triangular formation shown in Figure \ref{fig:2x2x3neutral} where the Black marbles are in spaces $e$, $f$, and either $b$ or $i$.  In this formation, Black threatens all but 3 positions on the board; the threatened spaces are labeled with exclamation points in the figure.

Since Black occupies the two middle spaces, they can almost always get to this triangle formation from $C$ in three moves or fewer.  (There is one exception, which we will address later in the proof.) If their third marble is already in position $b$ or $i$, then no moves are needed.  If their third marble is in position $a$, $c$, $h$, or $j$, the triangle formation will only take one move.  They can move their third marble into an adjacent empty $b$ or $i$ space, or they can use inline movement to create the triangle as illustrated in Figure~\ref{fig:2x2x3secure}.  If necessary, this inline movement can be a sumito move, which would end the game.

{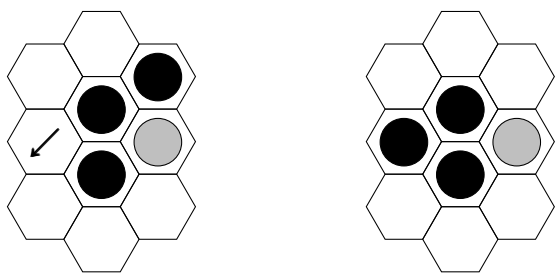
\begin{figure}[ht!]
\caption{Inline movement to create a triangle in one move.}
\label{fig:2x2x3secure}
\centering
\vspace{.5cm}
    \begin{tikzpicture} [hexa/.style= {shape=regular polygon,
                                   regular polygon sides=6,
                                   minimum size=1cm, draw,
                                   inner sep=0,anchor=south,}]
    \foreach \j in {0,...,2}{%
     \ifodd\j 
         \foreach \i in {0,...,3}{\node[hexa] at ({\j/2+\j/4},{(\i+1/2)*sin(60)}) {};}        
    \else
         \foreach \i in {1,...,3}{\node[hexa] at ({\j/2+\j/4},{\i*sin(60)}) {};}
    \fi}
    \draw[fill=black] (1.5,{(3+1/2)*sin(60)}) circle (9pt);
    \draw[fill=black] (.75,{(2)*sin(60)}) circle (9pt);
    \draw[fill=black] (.75,{(3)*sin(60)}) circle (9pt);

    \draw[fill=lightgray] (1.5,{(2+1/2)*sin(60)}) circle (9pt);
    
    \node at (0,{(2+1/2)*sin(60)}) {\large $\boldsymbol\swarrow$};
    
\end{tikzpicture}
\hspace{2cm}
     \begin{tikzpicture} [hexa/.style= {shape=regular polygon,
                                   regular polygon sides=6,
                                   minimum size=1cm, draw,
                                   inner sep=0,anchor=south,}]
    \foreach \j in {0,...,2}{%
     \ifodd\j 
         \foreach \i in {0,...,3}{\node[hexa] at ({\j/2+\j/4},{(\i+1/2)*sin(60)}) {};}        
    \else
         \foreach \i in {1,...,3}{\node[hexa] at ({\j/2+\j/4},{\i*sin(60)}) {};}
    \fi}
    \draw[fill=black] (0,{(2+1/2)*sin(60)}) circle (9pt);
    \draw[fill=black] (.75,{(2)*sin(60)}) circle (9pt);
    \draw[fill=black] (.75,{(3)*sin(60)}) circle (9pt);

    \draw[fill=lightgray] (1.5,{(2+1/2)*sin(60)}) circle (9pt);

\end{tikzpicture}
\end{figure}}

If Black's third marble is in position $d$ or $g$, and one of the two adjacent spaces ($a$, $c$, $h$, or $j$) is open, then Black can create the triangle in two moves. First they move into the empty $a$, $c$, $h$, or $j$ space, then they do a direct or inline move to create the triangle.  If both of the adjacent spaces are blocked by Gray, Black can create the triangle in three moves.  First they do an inline move to get their three marbles to the opposite end of the board, as shown in Figure~\ref{fig:crossingsecure}.  In response, Gray can block at most one of the corner spaces that are now adjacent to Black's third marble.  So Black can move into $a$, $c$, $h$, or $j$ with their second move, and do a direct or inline move to create the triangle with their third move.

{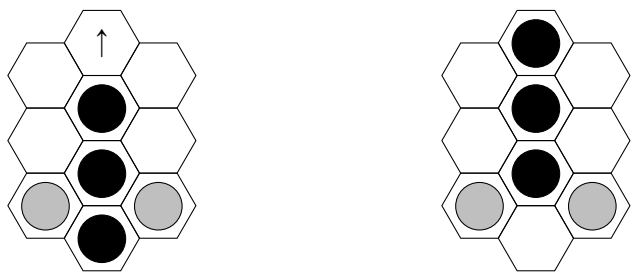
\begin{figure}[ht!]
\caption{Inline movement to create a triangle in three moves.}
\label{fig:crossingsecure}
\centering
\vspace{.5cm}
    \begin{tikzpicture} [hexa/.style= {shape=regular polygon,
                                   regular polygon sides=6,
                                   minimum size=1cm, draw,
                                   inner sep=0,anchor=south,}]
    \foreach \j in {0,...,2}{%
     \ifodd\j 
         \foreach \i in {0,...,3}{\node[hexa] at ({\j/2+\j/4},{(\i+1/2)*sin(60)}) {};}        
    \else
         \foreach \i in {1,...,3}{\node[hexa] at ({\j/2+\j/4},{\i*sin(60)}) {};}
    \fi}
    \draw[fill=black] (.75,{(1)*sin(60)}) circle (9pt);
    \draw[fill=black] (.75,{(2)*sin(60)}) circle (9pt);
    \draw[fill=black] (.75,{(3)*sin(60)}) circle (9pt);

    \draw[fill=lightgray] (1.5,{(1+1/2)*sin(60)}) circle (9pt);
    \draw[fill=lightgray] (0,{(1+1/2)*sin(60)}) circle (9pt);
    
    \node at (.75,{(4)*sin(60)}) {\large $\boldsymbol\uparrow$};
    
\end{tikzpicture}
\hspace{3cm}
     \begin{tikzpicture} [hexa/.style= {shape=regular polygon,
                                   regular polygon sides=6,
                                   minimum size=1cm, draw,
                                   inner sep=0,anchor=south,}]
    \foreach \j in {0,...,2}{%
     \ifodd\j 
         \foreach \i in {0,...,3}{\node[hexa] at ({\j/2+\j/4},{(\i+1/2)*sin(60)}) {};}        
    \else
         \foreach \i in {1,...,3}{\node[hexa] at ({\j/2+\j/4},{\i*sin(60)}) {};}
    \fi}
    \draw[fill=black] (.75,{(4)*sin(60)}) circle (9pt);
    \draw[fill=black] (.75,{(2)*sin(60)}) circle (9pt);
    \draw[fill=black] (.75,{(3)*sin(60)}) circle (9pt);

    \draw[fill=lightgray] (1.5,{(1+1/2)*sin(60)}) circle (9pt);
    \draw[fill=lightgray] (0,{(1+1/2)*sin(60)}) circle (9pt);

\end{tikzpicture}
\end{figure}}

Now let's consider gameplay that begins on a board with the Black triangle configuration. If Gray moves next, they must attempt to move their pieces into the three safe positions.  Failure to do so will allow Black to sumito and win on their next move.  If Black moves next, they will either sumito and win, or Gray's pieces are in the three safe positions.  In the latter case, Black can maintain control of the center by performing the inline move pictured in Figure~\ref{fig:2x2x3graysafe}. From this constellation, Gray's only safe response is to move their marble that's in space $b$ or $i$.  This allows Black to move their third marble into the space Gray just vacated, which creates an instance of the fork pictured way back in Figure~\ref{fig:forkfun}.  Since two nonadjacent Gray marbles are put in danger, Gray cannot move both of them to safety, and Black will sumito and win on their next move.

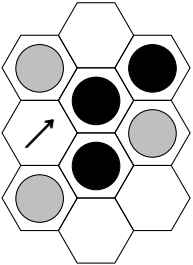
\begin{figure}[ht!]
\caption{Black move to secure a win when they must break their triangle.}
\label{fig:2x2x3graysafe}
\centering
\vspace{.5cm}
    \begin{tikzpicture} [hexa/.style= {shape=regular polygon,
                                   regular polygon sides=6,
                                   minimum size=1cm, draw,
                                   inner sep=0,anchor=south,}]
    \foreach \j in {0,...,2}{%
     \ifodd\j 
         \foreach \i in {0,...,3}{\node[hexa] at ({\j/2+\j/4},{(\i+1/2)*sin(60)}) {};}        
    \else
         \foreach \i in {1,...,3}{\node[hexa] at ({\j/2+\j/4},{\i*sin(60)}) {};}
    \fi}
    \draw[fill=black] (1.5,{(3+1/2)*sin(60)}) circle (9pt);
    
    \draw[fill=black] (.75,{(2)*sin(60)}) circle (9pt);
    \draw[fill=black] (.75,{(3)*sin(60)}) circle (9pt);
    
    \draw[fill=lightgray] (1.5,{(2+1/2)*sin(60)}) circle (9pt);
    \draw[fill=lightgray] (0,{(3+1/2)*sin(60)}) circle (9pt);
    \draw[fill=lightgray] (0,{(1+1/2)*sin(60)}) circle (9pt);
    
    \node at (0,{(2+1/2)*sin(60)}) {\large $\boldsymbol\nearrow$};
    
\end{tikzpicture}
\end{figure}

We have shown that if Black is able to get to a triangle configuration from $C$, then Black will win.  However, there is one situation where Black may lose despite having control of the middle of the board.  Suppose $C$ contains a subconfiguration isomorphic to the one pictured in Figure~\ref{fig:2x2x3neutral}, where the Gray marbles in positions $a$ and $b$ threaten the Black marble in position $c$.  If Black plays next on this board, they can force a win by either pushing off the third Gray marble or making an inline move to create the triangle configuration.  However, if Gray plays next, they can sumito the Black marble in position $c$ and win.  In other words, this board can be won by whomever plays next on it.  Thus, every constellation $C$ in which Black controls the center of the board is in outcome class $\LL$, except for those with the subconfiguration in Figure~\ref{fig:2x2x3neutral}, which are in outcome class $\N$.
\end{proof}

By Lemma~\ref{lem:negs}, we obtain the following corollary to Lemma~\ref{lem:223middle}.
\begin{corollary}
    Let $C$ be a $2\times2\times3$ Abalone constellation in which Gray occupies both of the middle two spaces. Then either $o(C)=\R$, or $-C$ contains the subconfiguration shown in Figure~\ref{fig:2x2x3neutral}. In the latter case, $o(C)=\N$.
    \label{cor:223middle}
\end{corollary}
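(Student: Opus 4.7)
The plan is to deduce this corollary directly from Lemma~\ref{lem:223middle} via the negation/strategy-swapping machinery already established. The key observation is that if Gray occupies both middle spaces in $C$, then in the negated constellation $-C$ (obtained by swapping colors) Black occupies both middle spaces, so Lemma~\ref{lem:223middle} applies to $-C$.

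First I would apply Lemma~\ref{lem:223middle} to $-C$ to obtain the dichotomy: either $o(-C) = \LL$, or $-C$ contains the neutral subconfiguration shown in Figure~\ref{fig:2x2x3neutral}, in which case $o(-C) = \N$. Then I would translate each alternative back to $C$. For the first alternative, Lemma~\ref{lem:negs} gives $o(-C) = \LL \iff o(C) = \R$ directly. For the second alternative, I would invoke the observation made in Subsection~\ref{subsec:cgt} that outcome class $\N$ is preserved under negation (since both players simply swap optimal strategies, leaving the ``next player wins'' property intact), so $o(-C) = \N$ implies $o(C) = \N$.

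The two alternatives assembled give exactly the statement of the corollary: either $o(C) = \R$, or $-C$ contains the subconfiguration in Figure~\ref{fig:2x2x3neutral} and $o(C) = \N$. There is no real obstacle here; the corollary is essentially a bookkeeping translation of Lemma~\ref{lem:223middle} through the color-swap symmetry, and the whole argument should fit in a few lines. The only thing to be a little careful about is making sure the ``Black controls spaces $e$ and $f$ in $-C$'' hypothesis is correctly obtained from the ``Gray controls spaces $e$ and $f$ in $C$'' hypothesis, which is immediate from the definition of $-C$ together with the fact that the labeled central spaces $e$ and $f$ are fixed by the operation of swapping marble colors.
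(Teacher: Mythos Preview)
Your proposal is correct and matches the paper's approach: the paper derives the corollary in one line ``By Lemma~\ref{lem:negs}'' applied to Lemma~\ref{lem:223middle}, which is exactly the color-swap translation you describe. If anything, your version is slightly more careful than the paper's, since you separately justify the $\N$ case via the observation in Subsection~\ref{subsec:cgt} (Lemma~\ref{lem:negs} as stated only covers $\LL/\R$ and $\Nhat/\Ncheck$).
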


Next, we will use Lemma~\ref{lem:223middle} and Corollary~\ref{cor:223middle} to determine the first two moves of an optimal game of $2\times 2\times 3$ Abalone.

    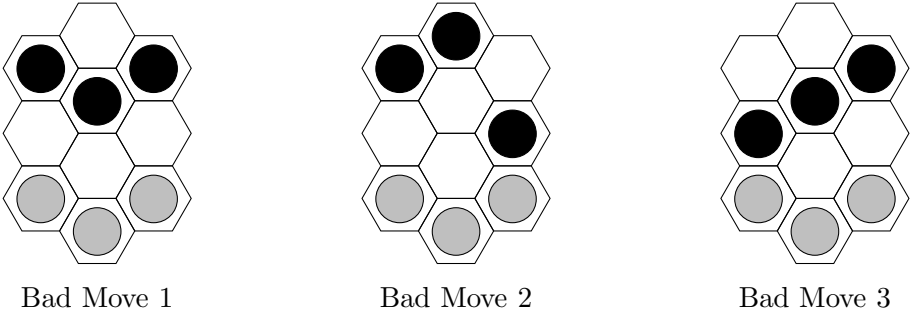
\begin{figure}[ht!]
\caption{Non-optimal first moves in $2\times 2\times 3$ Abalone.}
\label{fig:2x2x3badfirstmoves}
\centering
\vspace{.5cm}
    \begin{tikzpicture} [hexa/.style= {shape=regular polygon,
                                   regular polygon sides=6,
                                   minimum size=1cm, draw,
                                   inner sep=0,anchor=south,}]
    \foreach \j in {0,...,2}{%
     \ifodd\j 
         \foreach \i in {0,...,3}{\node[hexa] at ({\j/2+\j/4},{(\i+1/2)*sin(60)}) {};}        
    \else
         \foreach \i in {1,...,3}{\node[hexa] at ({\j/2+\j/4},{\i*sin(60)}) {};}
    \fi}
    \draw[fill=black] (0,{(3+1/2)*sin(60)}) circle (9pt);
    \draw[fill=black] (.75,{(3)*sin(60)}) circle (9pt);
    \draw[fill=black] (1.5,{(3+1/2)*sin(60)}) circle (9pt);
    \draw[fill=lightgray] (0,{(1+1/2)*sin(60)}) circle (9pt);
    \draw[fill=lightgray] (.75,{(1)*sin(60)}) circle (9pt);
    \draw[fill=lightgray] (1.5,{(1+1/2)*sin(60)}) circle (9pt);
    \node at (.75,0) {Bad Move 1};
\end{tikzpicture}
\hspace{2cm}
    \begin{tikzpicture} [hexa/.style= {shape=regular polygon,
                                   regular polygon sides=6,
                                   minimum size=1cm, draw,
                                   inner sep=0,anchor=south,}]
    \foreach \j in {0,...,2}{%
     \ifodd\j 
         \foreach \i in {0,...,3}{\node[hexa] at ({\j/2+\j/4},{(\i+1/2)*sin(60)}) {};}        
    \else
         \foreach \i in {1,...,3}{\node[hexa] at ({\j/2+\j/4},{\i*sin(60)}) {};}
    \fi}
    \draw[fill=black] (0,{(3+1/2)*sin(60)}) circle (9pt);
    \draw[fill=black] (.75,{(4)*sin(60)}) circle (9pt);
    \draw[fill=black] (1.5,{(2+1/2)*sin(60)}) circle (9pt);
    \draw[fill=lightgray] (0,{(1+1/2)*sin(60)}) circle (9pt);
    \draw[fill=lightgray] (.75,{(1)*sin(60)}) circle (9pt);
    \draw[fill=lightgray] (1.5,{(1+1/2)*sin(60)}) circle (9pt);
    \node at (.75,0) {Bad Move 2};
\end{tikzpicture}
\hspace{2cm}
    \begin{tikzpicture} [hexa/.style= {shape=regular polygon,
                                   regular polygon sides=6,
                                   minimum size=1cm, draw,
                                   inner sep=0,anchor=south,}]
    \foreach \j in {0,...,2}{%
     \ifodd\j 
         \foreach \i in {0,...,3}{\node[hexa] at ({\j/2+\j/4},{(\i+1/2)*sin(60)}) {};}        
    \else
         \foreach \i in {1,...,3}{\node[hexa] at ({\j/2+\j/4},{\i*sin(60)}) {};}
    \fi}
    \draw[fill=black] (0,{(2+1/2)*sin(60)}) circle (9pt);
    \draw[fill=black] (.75,{(3)*sin(60)}) circle (9pt);
    \draw[fill=black] (1.5,{(3+1/2)*sin(60)}) circle (9pt);
    \draw[fill=lightgray] (0,{(1+1/2)*sin(60)}) circle (9pt);
    \draw[fill=lightgray] (.75,{(1)*sin(60)}) circle (9pt);
    \draw[fill=lightgray] (1.5,{(1+1/2)*sin(60)}) circle (9pt);
    \node at (.75,0) {Bad Move 3};
\end{tikzpicture}
\end{figure}

\begin{lemma}
    If Black plays first in $2\times 2\times 3$ Abalone, then they will move to board $C1$ shown in Figure~\ref{fig:2x2x3optimalplayboards}.
    \label{lem:firstmove}
\end{lemma}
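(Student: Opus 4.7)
The plan is to enumerate Black's legal first moves from $C_0$ up to isomorphism and show that $C_1$ uniquely dominates. Using the vertical reflection of the board (which swaps $a \leftrightarrow h$, $b \leftrightarrow i$, $c \leftrightarrow j$ and fixes $d, e, f, g$), Black's single-marble moves from $\{c, g, j\}$ fall into three isomorphism classes: $j \to f$ (equivalently $c \to f$), producing $C_1$; $g \to f$, producing Bad Move 1; and $j \to i$ (equivalently $c \to b$), producing Bad Move 2. The only legal broadside move is the downward slide of the pair $(c, g)$ (equivalently $(g, j)$), producing Bad Move 3. No pushing move is available from $C_0$, since no Black two-marble line lies along a direction that contains a Gray marble. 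This reduces the problem to checking that $C_1$ is strictly preferable to each of Bad Moves 1, 2, and 3.

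The lever for each refutation is Corollary~\ref{cor:223middle}: once Gray occupies both middle hexes $e$ and $f$, outside of the exceptional neutral subconfiguration, Gray forces a win by completing a central Gray triangle at $\{e, f, b\}$ or $\{e, f, i\}$. For each Bad Move I would exhibit a Gray response that initiates a short forced sequence reaching such a position. For Bad Move 1, Black holds $f$ but has vacated the protective $g$, so Gray's reply $d \to e$ contests the center and leaves Black's $f$-marble isolated; a short case analysis then pushes the play into the hypothesis of Corollary~\ref{cor:223middle}. For Bad Move 2, Black's flank marble at $i$ is cut off from the central column, so no Black piece can quickly reinforce $e$ or $f$, and Gray again advances $d \to e$ to build the Gray triangle unopposed. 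For Bad Move 3, despite Black having moved a marble to $f$, Gray's reply $d \to e$ wins the race: Black's only remaining marble near the middle is at $j$, which is too far to reinforce $f$ before Gray executes a sumito or broadside to seize both central hexes.

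The final ingredient is to observe that, unlike the three Bad Moves, $C_1$ admits no Gray response that wins; this is the content of the subsequent lemmas in Section~\ref{sec:2x2x3}, which trace out the continuation of optimal play through the boards $C_2, \dots, C_8$. Combined with the three refutations above, this establishes $C_1$ as Black's unique optimal first move up to isomorphism.

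The main obstacle is making each Bad Move refutation rigorous, since Gray's path into the hypothesis of Corollary~\ref{cor:223middle} takes several plies and must account for every Black defensive resource, including any two-on-one sumito that Black might use to disrupt Gray's triangle. The trickiest case is Bad Move 3, where Black has already advanced a piece to $f$; here the argument must show carefully that Black's apparent head start in the middle is illusory because the surviving top-row marble at $j$ is too distant from $f$ to form a protective triangle or fork before Gray's central formation crystallizes.
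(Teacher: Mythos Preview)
Your plan matches the paper's approach exactly: enumerate the four nonisomorphic first moves, refute the three Bad Moves by driving play into a position where Gray controls the centre (Corollary~\ref{cor:223middle}), and defer the safety of $C_1$ to the later lemmas. Two specific details need adjustment, though.

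For Bad Move~3 your proposed Gray reply $d\to e$ is the wrong one. After $d\to e$ Gray sits at $\{a,e,h\}$ with no two-marble line aimed at $f$, and Black can answer $j\to g$, obtaining $\{b,f,g\}$; now any later Gray attempt to line up $d,e$ against $f$ meets the two-marble wall $f,g$ and the sumito is blocked. The move that actually works is $a\to e$, giving Gray $\{d,e,h\}$: the line $d,e$ immediately threatens to push $f$ to $g$, and every Black reply (including $b\to c$, $b\to a$, and the broadside $b,f\to c,g$) lets Gray seize $\{e,f\}$ on the very next move. Your intuition that ``$j$ is too far to reinforce $f$'' is misleading here, since after the broadside Black also has a marble at $b$; the real point is that Gray's $d,e$ line wins the push regardless.

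For Bad Move~1 your claim that every branch terminates in Corollary~\ref{cor:223middle} is slightly too strong. After Gray's $d\to e$, one of Black's two sensible defences is $j\to g$, reaching Black $\{c,f,g\}$, Gray $\{a,e,h\}$. The winning continuation is the in-line slide $h,e\to e,b$, giving Gray $\{a,b,e\}$: this is a \emph{fork} (Gray's $a,b$ threaten to eject $c$, and Black's only escape $c,f\to f,i$ walks $i$ into the $a,e$ push), not a centre-occupation position. So your refutations need forks as well as Corollary~\ref{cor:223middle}.
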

\begin{proof}
    Recall that the initial constellation for $2\times2\times3$ Abalone is $C0$.  From this board, Black has four non-isomorphic options, namely $C1$ and the three boards shown in Figure~\ref{fig:2x2x3badfirstmoves}.  

    Suppose Black makes Bad Move 1.  Then Gray can force one of the two sequences of moves shown in Figure~\ref{fig:badmove1seq}. After the first possible sequence of moves, Gray is able to safely create a triangle, which means Gray can force a win by Corollary~\ref{cor:223middle}.  In the second sequence of moves, Gray is able to create a fork.  Since Gray can force a win in both cases, Black will never make Bad Move 1.

    \begin{figure}[ht!]
\caption{First sequence (row 1) and second sequence (row 2) following Bad Move 1.}
\label{fig:badmove1seq}
\centering
\vspace{.5cm}
    \begin{tikzpicture} [hexa/.style= {shape=regular polygon,
                                   regular polygon sides=6,
                                   minimum size=1cm, draw,
                                   inner sep=0,anchor=south,}]
    \foreach \j in {0,...,2}{%
     \ifodd\j 
         \foreach \i in {0,...,3}{\node[hexa] at ({\j/2+\j/4},{(\i+1/2)*sin(60)}) {};}        
    \else
         \foreach \i in {1,...,3}{\node[hexa] at ({\j/2+\j/4},{\i*sin(60)}) {};}
    \fi}
    \draw[fill=black] (0,{(3+1/2)*sin(60)}) circle (9pt);
    \draw[fill=black] (.75,{(3)*sin(60)}) circle (9pt);
    \draw[fill=black] (1.5,{(3+1/2)*sin(60)}) circle (9pt);
    \draw[fill=lightgray] (0,{(1+1/2)*sin(60)}) circle (9pt);
    \draw[fill=lightgray] (.75,{(2)*sin(60)}) circle (9pt);
    \draw[fill=lightgray] (1.5,{(1+1/2)*sin(60)}) circle (9pt);
    \node at (.75,0) {Gray response};
\end{tikzpicture}
\hspace{.25cm}
    \begin{tikzpicture} [hexa/.style= {shape=regular polygon,
                                   regular polygon sides=6,
                                   minimum size=1cm, draw,
                                   inner sep=0,anchor=south,}]
    \foreach \j in {0,...,2}{%
     \ifodd\j 
         \foreach \i in {0,...,3}{\node[hexa] at ({\j/2+\j/4},{(\i+1/2)*sin(60)}) {};}        
    \else
         \foreach \i in {1,...,3}{\node[hexa] at ({\j/2+\j/4},{\i*sin(60)}) {};}
    \fi}
    \draw[fill=black] (0,{(3+1/2)*sin(60)}) circle (9pt);
    \draw[fill=black] (.75,{(4)*sin(60)}) circle (9pt);
    \draw[fill=black] (1.5,{(3+1/2)*sin(60)}) circle (9pt);
    \draw[fill=lightgray] (0,{(1+1/2)*sin(60)}) circle (9pt);
    \draw[fill=lightgray] (.75,{(2)*sin(60)}) circle (9pt);
    \draw[fill=lightgray] (1.5,{(1+1/2)*sin(60)}) circle (9pt);
    \node at (.75,0) {Black response 1};
\end{tikzpicture}
\hspace{.25cm}
    \begin{tikzpicture} [hexa/.style= {shape=regular polygon,
                                   regular polygon sides=6,
                                   minimum size=1cm, draw,
                                   inner sep=0,anchor=south,}]
    \foreach \j in {0,...,2}{%
     \ifodd\j 
         \foreach \i in {0,...,3}{\node[hexa] at ({\j/2+\j/4},{(\i+1/2)*sin(60)}) {};}        
    \else
         \foreach \i in {1,...,3}{\node[hexa] at ({\j/2+\j/4},{\i*sin(60)}) {};}
    \fi}
    \draw[fill=black] (0,{(3+1/2)*sin(60)}) circle (9pt);
    \draw[fill=black] (.75,{(4)*sin(60)}) circle (9pt);
    \draw[fill=black] (1.5,{(3+1/2)*sin(60)}) circle (9pt);
    \draw[fill=lightgray] (0,{(1+1/2)*sin(60)}) circle (9pt);
    \draw[fill=lightgray] (.75,{(3)*sin(60)}) circle (9pt);
    \draw[fill=lightgray] (1.5,{(2+1/2)*sin(60)}) circle (9pt);
    \node at (.75,0) {Gray response};
\end{tikzpicture}
\hspace{.25cm}
    \begin{tikzpicture} [hexa/.style= {shape=regular polygon,
                                   regular polygon sides=6,
                                   minimum size=1cm, draw,
                                   inner sep=0,anchor=south,}]
    \foreach \j in {0,...,2}{%
     \ifodd\j 
         \foreach \i in {0,...,3}{\node[hexa] at ({\j/2+\j/4},{(\i+1/2)*sin(60)}) {};}        
    \else
         \foreach \i in {1,...,3}{\node[hexa] at ({\j/2+\j/4},{\i*sin(60)}) {};}
    \fi}
    \draw[fill=black] (0,{(2+1/2)*sin(60)}) circle (9pt);
    \draw[fill=black] (.75,{(4)*sin(60)}) circle (9pt);
    \draw[fill=black] (1.5,{(3.5)*sin(60)}) circle (9pt);
    \draw[fill=lightgray] (0,{(1+1/2)*sin(60)}) circle (9pt);
    \draw[fill=lightgray] (.75,{(3)*sin(60)}) circle (9pt);
    \draw[fill=lightgray] (1.5,{(2.5)*sin(60)}) circle (9pt);
    \node at (.75,0) {Black response};
\end{tikzpicture}
\hspace{.25cm}
    \begin{tikzpicture} [hexa/.style= {shape=regular polygon,
                                   regular polygon sides=6,
                                   minimum size=1cm, draw,
                                   inner sep=0,anchor=south,}]
    \foreach \j in {0,...,2}{%
     \ifodd\j 
         \foreach \i in {0,...,3}{\node[hexa] at ({\j/2+\j/4},{(\i+1/2)*sin(60)}) {};}        
    \else
         \foreach \i in {1,...,3}{\node[hexa] at ({\j/2+\j/4},{\i*sin(60)}) {};}
    \fi}
    \draw[fill=black] (0,{(2+1/2)*sin(60)}) circle (9pt);
    \draw[fill=black] (.75,{(4)*sin(60)}) circle (9pt);
    \draw[fill=black] (1.5,{(3.5)*sin(60)}) circle (9pt);
    \draw[fill=lightgray] (.75,{(2)*sin(60)}) circle (9pt);
    \draw[fill=lightgray] (.75,{(3)*sin(60)}) circle (9pt);
    \draw[fill=lightgray] (1.5,{(2.5)*sin(60)}) circle (9pt);
    \node at (.75,0) {Gray win};
\end{tikzpicture}
\ \\
\vspace{.5cm}
    \begin{tikzpicture} [hexa/.style= {shape=regular polygon,
                                   regular polygon sides=6,
                                   minimum size=1cm, draw,
                                   inner sep=0,anchor=south,}]
    \foreach \j in {0,...,2}{%
     \ifodd\j 
         \foreach \i in {0,...,3}{\node[hexa] at ({\j/2+\j/4},{(\i+1/2)*sin(60)}) {};}        
    \else
         \foreach \i in {1,...,3}{\node[hexa] at ({\j/2+\j/4},{\i*sin(60)}) {};}
    \fi}
    \draw[fill=black] (0,{(3+1/2)*sin(60)}) circle (9pt);
    \draw[fill=black] (.75,{(3)*sin(60)}) circle (9pt);
    \draw[fill=black] (1.5,{(3+1/2)*sin(60)}) circle (9pt);
    \draw[fill=lightgray] (0,{(1+1/2)*sin(60)}) circle (9pt);
    \draw[fill=lightgray] (.75,{(2)*sin(60)}) circle (9pt);
    \draw[fill=lightgray] (1.5,{(1+1/2)*sin(60)}) circle (9pt);
    \node at (.75,0) {Gray response};
\end{tikzpicture}
\hspace{1cm}
    \begin{tikzpicture} [hexa/.style= {shape=regular polygon,
                                   regular polygon sides=6,
                                   minimum size=1cm, draw,
                                   inner sep=0,anchor=south,}]
    \foreach \j in {0,...,2}{%
     \ifodd\j 
         \foreach \i in {0,...,3}{\node[hexa] at ({\j/2+\j/4},{(\i+1/2)*sin(60)}) {};}        
    \else
         \foreach \i in {1,...,3}{\node[hexa] at ({\j/2+\j/4},{\i*sin(60)}) {};}
    \fi}
    \draw[fill=black] (0,{(3+1/2)*sin(60)}) circle (9pt);
    \draw[fill=black] (.75,{(4)*sin(60)}) circle (9pt);
    \draw[fill=black] (.75,{(3)*sin(60)}) circle (9pt);
    \draw[fill=lightgray] (0,{(1+1/2)*sin(60)}) circle (9pt);
    \draw[fill=lightgray] (.75,{(2)*sin(60)}) circle (9pt);
    \draw[fill=lightgray] (1.5,{(1+1/2)*sin(60)}) circle (9pt);
    \node at (.75,0) {Black response 2};
\end{tikzpicture}
\hspace{1cm}
    \begin{tikzpicture} [hexa/.style= {shape=regular polygon,
                                   regular polygon sides=6,
                                   minimum size=1cm, draw,
                                   inner sep=0,anchor=south,}]
    \foreach \j in {0,...,2}{%
     \ifodd\j 
         \foreach \i in {0,...,3}{\node[hexa] at ({\j/2+\j/4},{(\i+1/2)*sin(60)}) {};}        
    \else
         \foreach \i in {1,...,3}{\node[hexa] at ({\j/2+\j/4},{\i*sin(60)}) {};}
    \fi}
    \draw[fill=black] (0,{(3+1/2)*sin(60)}) circle (9pt);
    \draw[fill=black] (.75,{(4)*sin(60)}) circle (9pt);
    \draw[fill=black] (.75,{(3)*sin(60)}) circle (9pt);
    \draw[fill=lightgray] (0,{(1+1/2)*sin(60)}) circle (9pt);
    \draw[fill=lightgray] (.75,{(2)*sin(60)}) circle (9pt);
    \draw[fill=lightgray] (0,{(2+1/2)*sin(60)}) circle (9pt);
    \node at (1.5,{(2+1/2)*sin(60)}) {{\color{red} \huge\textbf{!}}};
    \node at (0,{(3+1/2)*sin(60)}) {{\color{red} \huge\textbf{!}}};
    \node at (.75,0) {Gray win};
\end{tikzpicture}
\end{figure}
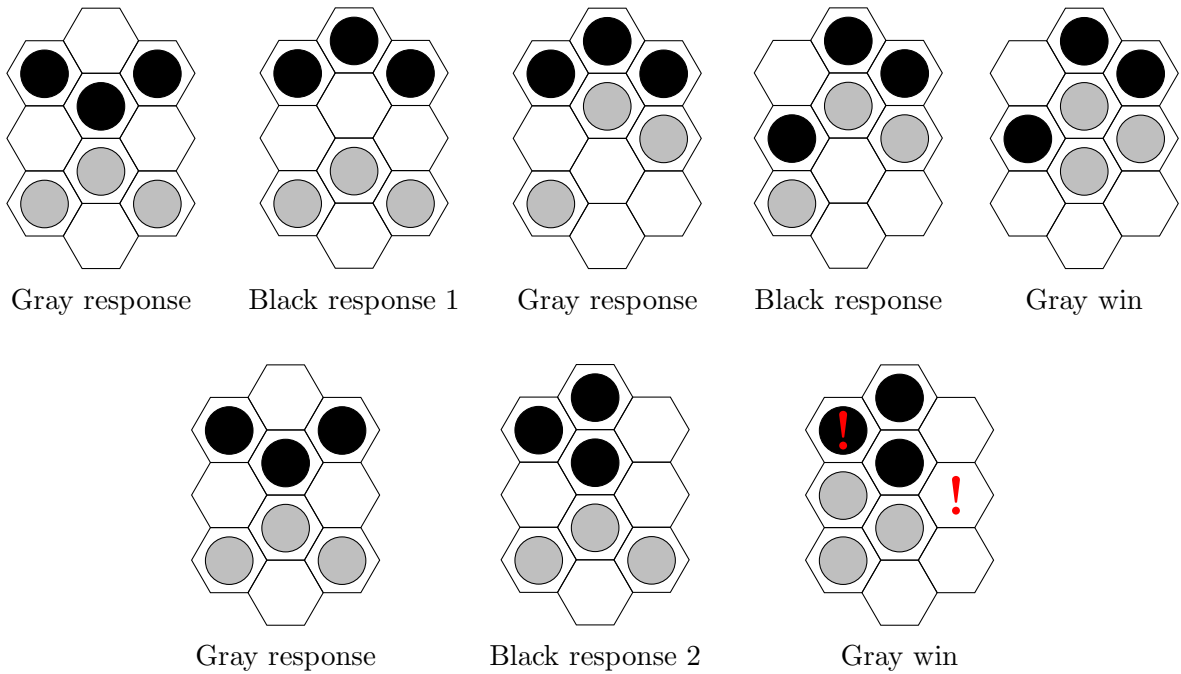

    Suppose Black makes Bad Move 2.  Then Gray can respond by choosing the option shown in Figure~\ref{fig:badmove2resp}. After this, Black must move their threatened piece out of danger.  However, doing so leads to either Black response 1  or Black response 2 from Figure~\ref{fig:badmove1seq}. We've already seen that Gray can force a win in both cases, so Black will never make Bad Move 2.

    \begin{figure}[ht!]
\caption{Gray response to Bad Move 2.}
\label{fig:badmove2resp}
\centering
\vspace{.5cm}
    \begin{tikzpicture} [hexa/.style= {shape=regular polygon,
                                   regular polygon sides=6,
                                   minimum size=1cm, draw,
                                   inner sep=0,anchor=south,}]
    \foreach \j in {0,...,2}{%
     \ifodd\j 
         \foreach \i in {0,...,3}{\node[hexa] at ({\j/2+\j/4},{(\i+1/2)*sin(60)}) {};}        
    \else
         \foreach \i in {1,...,3}{\node[hexa] at ({\j/2+\j/4},{\i*sin(60)}) {};}
    \fi}
    \draw[fill=black] (0,{(3+1/2)*sin(60)}) circle (9pt);
    \draw[fill=black] (.75,{(4)*sin(60)}) circle (9pt);
    \draw[fill=black] (1.5,{(2+1/2)*sin(60)}) circle (9pt);
    \draw[fill=lightgray] (0,{(1+1/2)*sin(60)}) circle (9pt);
    \draw[fill=lightgray] (.75,{(2)*sin(60)}) circle (9pt);
    \draw[fill=lightgray] (1.5,{(1+1/2)*sin(60)}) circle (9pt);
\end{tikzpicture}
\end{figure}
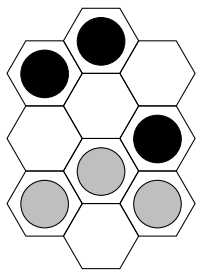

    Suppose Black makes Bad Move 3.  Then Gray can force one of the three sequences of moves shown in Figure~\ref{fig:badmove3seq1}. In each of these three sequences, Gray moves to a position where they can safely create a triangle.  Thus, by Corollary~\ref{cor:223middle}, Gray can force a win in all three cases, so Black will never make Bad Move 3.

    \begin{figure}[ht!]
\caption{First sequence (row 1), second sequence (row 2), and third sequence (row 3) following Bad Move 3.}
\label{fig:badmove3seq1}
\centering
\vspace{.5cm}
    \begin{tikzpicture} [hexa/.style= {shape=regular polygon,
                                   regular polygon sides=6,
                                   minimum size=1cm, draw,
                                   inner sep=0,anchor=south,}]
    \foreach \j in {0,...,2}{%
     \ifodd\j 
         \foreach \i in {0,...,3}{\node[hexa] at ({\j/2+\j/4},{(\i+1/2)*sin(60)}) {};}        
    \else
         \foreach \i in {1,...,3}{\node[hexa] at ({\j/2+\j/4},{\i*sin(60)}) {};}
    \fi}
    \draw[fill=black] (0,{(2+1/2)*sin(60)}) circle (9pt);
    \draw[fill=black] (.75,{(3)*sin(60)}) circle (9pt);
    \draw[fill=black] (1.5,{(3+1/2)*sin(60)}) circle (9pt);
    \draw[fill=lightgray] (.75,{(2)*sin(60)}) circle (9pt);
    \draw[fill=lightgray] (.75,{(1)*sin(60)}) circle (9pt);
    \draw[fill=lightgray] (1.5,{(1+1/2)*sin(60)}) circle (9pt);
    \node at (.75,0) {Gray response};
\end{tikzpicture}
\hspace{1cm}
    \begin{tikzpicture} [hexa/.style= {shape=regular polygon,
                                   regular polygon sides=6,
                                   minimum size=1cm, draw,
                                   inner sep=0,anchor=south,}]
    \foreach \j in {0,...,2}{%
     \ifodd\j 
         \foreach \i in {0,...,3}{\node[hexa] at ({\j/2+\j/4},{(\i+1/2)*sin(60)}) {};}        
    \else
         \foreach \i in {1,...,3}{\node[hexa] at ({\j/2+\j/4},{\i*sin(60)}) {};}
    \fi}
    \draw[fill=black] (0,{(3+1/2)*sin(60)}) circle (9pt);
    \draw[fill=black] (.75,{(3)*sin(60)}) circle (9pt);
    \draw[fill=black] (1.5,{(3+1/2)*sin(60)}) circle (9pt);
    \draw[fill=lightgray] (.75,{(2)*sin(60)}) circle (9pt);
    \draw[fill=lightgray] (.75,{(1)*sin(60)}) circle (9pt);
    \draw[fill=lightgray] (1.5,{(1+1/2)*sin(60)}) circle (9pt);
    \node at (.75,0) {Black response 1};
\end{tikzpicture}
\hspace{1cm}
    \begin{tikzpicture} [hexa/.style= {shape=regular polygon,
                                   regular polygon sides=6,
                                   minimum size=1cm, draw,
                                   inner sep=0,anchor=south,}]
    \foreach \j in {0,...,2}{%
     \ifodd\j 
         \foreach \i in {0,...,3}{\node[hexa] at ({\j/2+\j/4},{(\i+1/2)*sin(60)}) {};}        
    \else
         \foreach \i in {1,...,3}{\node[hexa] at ({\j/2+\j/4},{\i*sin(60)}) {};}
    \fi}
    \draw[fill=black] (0,{(3+1/2)*sin(60)}) circle (9pt);
    \draw[fill=black] (.75,{(4)*sin(60)}) circle (9pt);
    \draw[fill=black] (1.5,{(3+1/2)*sin(60)}) circle (9pt);
    \draw[fill=lightgray] (.75,{(3)*sin(60)}) circle (9pt);
    \draw[fill=lightgray] (.75,{(2)*sin(60)}) circle (9pt);
    \draw[fill=lightgray] (1.5,{(1+1/2)*sin(60)}) circle (9pt);
    \node at (.75,0) {Gray response};
\end{tikzpicture}
\ \\
\vspace{.5cm}
    \begin{tikzpicture} [hexa/.style= {shape=regular polygon,
                                   regular polygon sides=6,
                                   minimum size=1cm, draw,
                                   inner sep=0,anchor=south,}]
    \foreach \j in {0,...,2}{%
     \ifodd\j 
         \foreach \i in {0,...,3}{\node[hexa] at ({\j/2+\j/4},{(\i+1/2)*sin(60)}) {};}        
    \else
         \foreach \i in {1,...,3}{\node[hexa] at ({\j/2+\j/4},{\i*sin(60)}) {};}
    \fi}
    \draw[fill=black] (0,{(2+1/2)*sin(60)}) circle (9pt);
    \draw[fill=black] (.75,{(3)*sin(60)}) circle (9pt);
    \draw[fill=black] (1.5,{(3+1/2)*sin(60)}) circle (9pt);
    \draw[fill=lightgray] (.75,{(2)*sin(60)}) circle (9pt);
    \draw[fill=lightgray] (.75,{(1)*sin(60)}) circle (9pt);
    \draw[fill=lightgray] (1.5,{(1+1/2)*sin(60)}) circle (9pt);
    \node at (.75,0) {Gray response};
\end{tikzpicture}
\hspace{1cm}
    \begin{tikzpicture} [hexa/.style= {shape=regular polygon,
                                   regular polygon sides=6,
                                   minimum size=1cm, draw,
                                   inner sep=0,anchor=south,}]
    \foreach \j in {0,...,2}{%
     \ifodd\j 
         \foreach \i in {0,...,3}{\node[hexa] at ({\j/2+\j/4},{(\i+1/2)*sin(60)}) {};}        
    \else
         \foreach \i in {1,...,3}{\node[hexa] at ({\j/2+\j/4},{\i*sin(60)}) {};}
    \fi}
    \draw[fill=black] (0,{(3+1/2)*sin(60)}) circle (9pt);
    \draw[fill=black] (.75,{(4)*sin(60)}) circle (9pt);
    \draw[fill=black] (1.5,{(3+1/2)*sin(60)}) circle (9pt);
    \draw[fill=lightgray] (.75,{(2)*sin(60)}) circle (9pt);
    \draw[fill=lightgray] (.75,{(1)*sin(60)}) circle (9pt);
    \draw[fill=lightgray] (1.5,{(1+1/2)*sin(60)}) circle (9pt);
    \node at (.75,0) {Black response 2};
\end{tikzpicture}
\hspace{1cm}
    \begin{tikzpicture} [hexa/.style= {shape=regular polygon,
                                   regular polygon sides=6,
                                   minimum size=1cm, draw,
                                   inner sep=0,anchor=south,}]
    \foreach \j in {0,...,2}{%
     \ifodd\j 
         \foreach \i in {0,...,3}{\node[hexa] at ({\j/2+\j/4},{(\i+1/2)*sin(60)}) {};}        
    \else
         \foreach \i in {1,...,3}{\node[hexa] at ({\j/2+\j/4},{\i*sin(60)}) {};}
    \fi}
    \draw[fill=black] (0,{(3+1/2)*sin(60)}) circle (9pt);
    \draw[fill=black] (.75,{(4)*sin(60)}) circle (9pt);
    \draw[fill=black] (1.5,{(3+1/2)*sin(60)}) circle (9pt);
    \draw[fill=lightgray] (.75,{(3)*sin(60)}) circle (9pt);
    \draw[fill=lightgray] (.75,{(2)*sin(60)}) circle (9pt);
    \draw[fill=lightgray] (1.5,{(1+1/2)*sin(60)}) circle (9pt);
    \node at (.75,0) {Gray response};
\end{tikzpicture}
\ \\
\vspace{.5cm}
    \begin{tikzpicture} [hexa/.style= {shape=regular polygon,
                                   regular polygon sides=6,
                                   minimum size=1cm, draw,
                                   inner sep=0,anchor=south,}]
    \foreach \j in {0,...,2}{%
     \ifodd\j 
         \foreach \i in {0,...,3}{\node[hexa] at ({\j/2+\j/4},{(\i+1/2)*sin(60)}) {};}        
    \else
         \foreach \i in {1,...,3}{\node[hexa] at ({\j/2+\j/4},{\i*sin(60)}) {};}
    \fi}
    \draw[fill=black] (0,{(2+1/2)*sin(60)}) circle (9pt);
    \draw[fill=black] (.75,{(3)*sin(60)}) circle (9pt);
    \draw[fill=black] (1.5,{(3+1/2)*sin(60)}) circle (9pt);
    \draw[fill=lightgray] (.75,{(2)*sin(60)}) circle (9pt);
    \draw[fill=lightgray] (.75,{(1)*sin(60)}) circle (9pt);
    \draw[fill=lightgray] (1.5,{(1+1/2)*sin(60)}) circle (9pt);
    \node at (.75,0) {Gray response};
\end{tikzpicture}
\hspace{1cm}
    \begin{tikzpicture} [hexa/.style= {shape=regular polygon,
                                   regular polygon sides=6,
                                   minimum size=1cm, draw,
                                   inner sep=0,anchor=south,}]
    \foreach \j in {0,...,2}{%
     \ifodd\j 
         \foreach \i in {0,...,3}{\node[hexa] at ({\j/2+\j/4},{(\i+1/2)*sin(60)}) {};}        
    \else
         \foreach \i in {1,...,3}{\node[hexa] at ({\j/2+\j/4},{\i*sin(60)}) {};}
    \fi}
    \draw[fill=black] (0,{(1+1/2)*sin(60)}) circle (9pt);
    \draw[fill=black] (.75,{(3)*sin(60)}) circle (9pt);
    \draw[fill=black] (1.5,{(3+1/2)*sin(60)}) circle (9pt);
    \draw[fill=lightgray] (.75,{(2)*sin(60)}) circle (9pt);
    \draw[fill=lightgray] (.75,{(1)*sin(60)}) circle (9pt);
    \draw[fill=lightgray] (1.5,{(1+1/2)*sin(60)}) circle (9pt);
    \node at (.75,0) {Black response 3};
\end{tikzpicture}
\hspace{1cm}
    \begin{tikzpicture} [hexa/.style= {shape=regular polygon,
                                   regular polygon sides=6,
                                   minimum size=1cm, draw,
                                   inner sep=0,anchor=south,}]
    \foreach \j in {0,...,2}{%
     \ifodd\j 
         \foreach \i in {0,...,3}{\node[hexa] at ({\j/2+\j/4},{(\i+1/2)*sin(60)}) {};}        
    \else
         \foreach \i in {1,...,3}{\node[hexa] at ({\j/2+\j/4},{\i*sin(60)}) {};}
    \fi}
    \draw[fill=black] (0,{(1+1/2)*sin(60)}) circle (9pt);
    \draw[fill=black] (.75,{(4)*sin(60)}) circle (9pt);
    \draw[fill=black] (1.5,{(3+1/2)*sin(60)}) circle (9pt);
    \draw[fill=lightgray] (.75,{(2)*sin(60)}) circle (9pt);
    \draw[fill=lightgray] (.75,{(3)*sin(60)}) circle (9pt);
    \draw[fill=lightgray] (1.5,{(1+1/2)*sin(60)}) circle (9pt);
    \node at (.75,0) {Gray response};
\end{tikzpicture}
\end{figure}
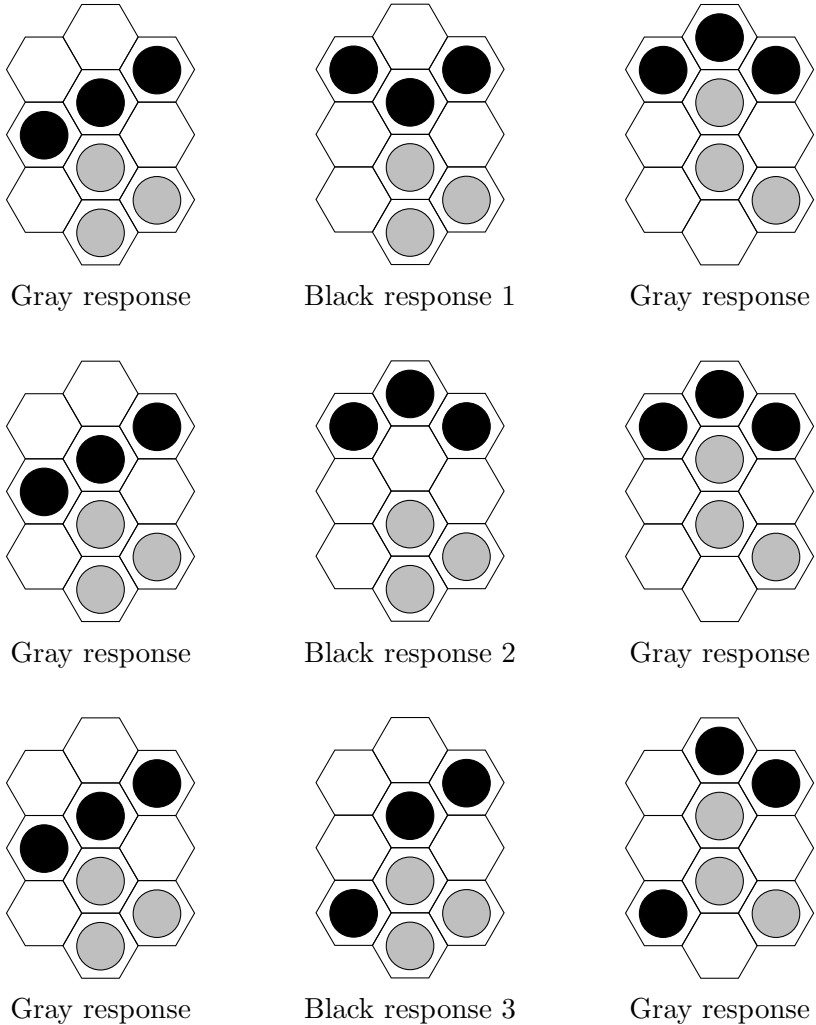

    Since we have determined that Gray can force a win when Black makes Bad Moves 1-3, the only logical first move for Black to make is $C1$.
\end{proof}

\begin{lemma}
    If Gray plays next on board $C1$ shown in Figure~\ref{fig:2x2x3optimalplayboards}, then they will move to either board $C2$ or $C3$.
    \label{lem:optimalgrayresponses}
\end{lemma}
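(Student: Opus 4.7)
The plan is to enumerate Gray's legal options from $C1$ and show that every option other than $C2$ and $C3$ hands Black a forced win. Using the space labels from Figure~\ref{fig:2x2x3neutral}, board $C1$ has Black at $c, f, g$ and Gray at $a, d, h$. I would first catalog Gray's moves. Direct enumeration should produce exactly seven nonisomorphic options: the five single-marble moves $a \to b$, $a \to e$, $d \to e$, $h \to e$, and $h \to i$, together with the two broadside moves in which the pair $a, d$ shifts up to $b, e$ and the pair $d, h$ shifts up to $e, i$. Two of these (namely $a \to e$ and $h \to e$) land in $C3$ and $C2$, so the job is to refute the remaining five.

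For each of the other five options, I would exhibit a Black response that seizes both middle squares. In the two options that leave $e$ empty ($a \to b$ and $h \to i$), Black slides the inline pair at $f, g$ down one square into $e, f$. In the three options where Gray has now occupied $e$ but vacated $d$ (namely $d \to e$ together with the two broadside moves), Black's inline pair $f, g$ executes a 2-on-1 sumito that pushes the Gray marble from $e$ down into $d$; this push is legal precisely because $d$ is empty in all three of these cases. In all five scenarios the resulting constellation places Black at $c, e, f$, giving Black control of both middle spaces.

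To finish, I would invoke Lemma~\ref{lem:223middle}. The lemma places the resulting constellation in $\LL$ unless it contains a neutral subconfiguration, which up to isomorphism requires Gray to occupy the two squares that would push Black's third marble off the board. Since Black's third marble is at $c$ in every case, this would force Gray to hold both $a$ and $b$. Listing the five resulting Gray triples -- $\{b, d, h\}$, $\{a, d, h\}$, $\{a, d, i\}$, $\{b, d, h\}$, and $\{a, d, i\}$ -- shows that Gray never holds both $a$ and $b$ simultaneously, so none of the five resulting constellations contains a neutral subconfiguration and each lies in $\LL$. Thus each of the five bad moves loses for Gray, which forces Gray's optimal move to be $C2$ or $C3$.

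The main obstacle is really just careful bookkeeping: ensuring the two broadside moves are not overlooked in Gray's option list (these are the easy ones to miss), correctly pairing each bad move with the Black response that captures the middle, and checking that no post-response constellation degenerates into the neutral subconfiguration. Once those checks are assembled, every refutation reduces to a single appeal to Lemma~\ref{lem:223middle}.
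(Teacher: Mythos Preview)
Your argument is correct, and it is in fact tidier than the paper's own proof. Both proofs enumerate the same seven nonisomorphic Gray options from $C1$ and set aside $C2$ and $C3$, but they refute the remaining five differently. The paper splits those five into three groups: two moves ($h\to i$ and the broadside $d,h\to e,i$) are dismissed because Black can sumito the marble at $i$ off the board immediately via the line $c\text{--}f\text{--}i$; two more (Bad Responses~1 and~2) are handled by the same push $g,f\to f,e$ that you use, but the paper then traces out two further half-moves before reaching a triangle and citing Lemma~\ref{lem:223middle}; and Bad Response~3 ($d\to e$) is refuted with a different Black move, $g\to j$, landing on a self-negative board and appealing by symmetry to the analysis already carried out in Lemma~\ref{lem:firstmove}.

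Your approach instead uses a single Black reply, the inline move $g,f\to f,e$ (a sumito whenever $e$ is occupied), against all five bad options, and then invokes Lemma~\ref{lem:223middle} directly at the resulting position rather than playing on to a triangle. The uniformity is a genuine gain: one response, one lemma citation, and a short check that none of the five resulting Gray triples contains $\{a,b\}$. The paper's treatment of the two ``immediately dangerous'' moves is quicker in isolation (one push ends the game), but its handling of Bad Response~3 via self-negativity and a cross-reference is more circuitous than your direct push into a Lemma~\ref{lem:223middle} position. Your bookkeeping on the five Gray triples and the neutral-subconfiguration check is accurate.
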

\begin{proof}
    From board $C1$, Gray has seven non-isomorphic options.  Five of these options are $C2$, $C3$, and the the three boards in Figure~\ref{fig:2x2x3badgrayresponses}.  The two options not depicted would put one of Gray's pieces in danger, allowing Black to immediately push a Gray piece off the board.  These are not logical moves for Gray to make, so we will not consider them.

    \begin{figure}[ht!]
\caption{Non-optimal responses in $2\times 2\times 3$ Abalone.}
\label{fig:2x2x3badgrayresponses}
\centering
\vspace{.5cm}
    \begin{tikzpicture} [hexa/.style= {shape=regular polygon,
                                   regular polygon sides=6,
                                   minimum size=1cm, draw,
                                   inner sep=0,anchor=south,}]
    \foreach \j in {0,...,2}{%
     \ifodd\j 
         \foreach \i in {0,...,3}{\node[hexa] at ({\j/2+\j/4},{(\i+1/2)*sin(60)}) {};}        
    \else
         \foreach \i in {1,...,3}{\node[hexa] at ({\j/2+\j/4},{\i*sin(60)}) {};}
    \fi}
    \draw[fill=black] (0,{(3+1/2)*sin(60)}) circle (9pt);
    \draw[fill=black] (.75,{(4)*sin(60)}) circle (9pt);
    \draw[fill=black] (.75,{(3)*sin(60)}) circle (9pt);
    \draw[fill=lightgray] (0,{(2+1/2)*sin(60)}) circle (9pt);
    \draw[fill=lightgray] (.75,{(2)*sin(60)}) circle (9pt);
    \draw[fill=lightgray] (1.5,{(1+1/2)*sin(60)}) circle (9pt);
    \node at (.75,0) {Bad Response 1};
\end{tikzpicture}
\hspace{2cm}
    \begin{tikzpicture} [hexa/.style= {shape=regular polygon,
                                   regular polygon sides=6,
                                   minimum size=1cm, draw,
                                   inner sep=0,anchor=south,}]
    \foreach \j in {0,...,2}{%
     \ifodd\j 
         \foreach \i in {0,...,3}{\node[hexa] at ({\j/2+\j/4},{(\i+1/2)*sin(60)}) {};}        
    \else
         \foreach \i in {1,...,3}{\node[hexa] at ({\j/2+\j/4},{\i*sin(60)}) {};}
    \fi}
    \draw[fill=black] (0,{(3+1/2)*sin(60)}) circle (9pt);
    \draw[fill=black] (.75,{(4)*sin(60)}) circle (9pt);
    \draw[fill=black] (.75,{(3)*sin(60)}) circle (9pt);
    \draw[fill=lightgray] (0,{(2+1/2)*sin(60)}) circle (9pt);
    \draw[fill=lightgray] (.75,{(1)*sin(60)}) circle (9pt);
    \draw[fill=lightgray] (1.5,{(1+1/2)*sin(60)}) circle (9pt);
    \node at (.75,0) {Bad Response 2};
\end{tikzpicture}
\hspace{2cm}
    \begin{tikzpicture} [hexa/.style= {shape=regular polygon,
                                   regular polygon sides=6,
                                   minimum size=1cm, draw,
                                   inner sep=0,anchor=south,}]
    \foreach \j in {0,...,2}{%
     \ifodd\j 
         \foreach \i in {0,...,3}{\node[hexa] at ({\j/2+\j/4},{(\i+1/2)*sin(60)}) {};}        
    \else
         \foreach \i in {1,...,3}{\node[hexa] at ({\j/2+\j/4},{\i*sin(60)}) {};}
    \fi}
    \draw[fill=black] (0,{(3+1/2)*sin(60)}) circle (9pt);
    \draw[fill=black] (.75,{(3)*sin(60)}) circle (9pt);
    \draw[fill=black] (.75,{(4)*sin(60)}) circle (9pt);
    \draw[fill=lightgray] (0,{(1+1/2)*sin(60)}) circle (9pt);
    \draw[fill=lightgray] (.75,{(2)*sin(60)}) circle (9pt);
    \draw[fill=lightgray] (1.5,{(1+1/2)*sin(60)}) circle (9pt);
    \node at (.75,0) {Bad Response 3};
\end{tikzpicture}
\end{figure}
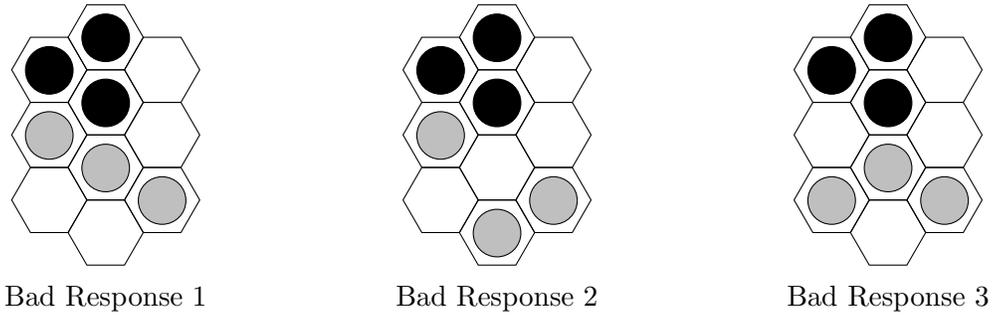

    Suppose Gray makes Bad Response 1 or Bad Response 2. Then Black can take the option shown in Figure~\ref{fig:badresp1seq}.  Gray must respond by moving its endangered piece, which allows Black to safely create a triangle.  Thus, by Lemma~\ref{lem:223middle}, Black can force a win, so Gray will never make Bad Response 1 or 2.

    \begin{figure}[ht!]
\caption{Sequence of moves following Bad Response 1 or 2.}
\label{fig:badresp1seq}
\centering
\vspace{.5cm}
    \begin{tikzpicture} [hexa/.style= {shape=regular polygon,
                                   regular polygon sides=6,
                                   minimum size=1cm, draw,
                                   inner sep=0,anchor=south,}]
    \foreach \j in {0,...,2}{%
     \ifodd\j 
         \foreach \i in {0,...,3}{\node[hexa] at ({\j/2+\j/4},{(\i+1/2)*sin(60)}) {};}        
    \else
         \foreach \i in {1,...,3}{\node[hexa] at ({\j/2+\j/4},{\i*sin(60)}) {};}
    \fi}
    \draw[fill=black] (0,{(3+1/2)*sin(60)}) circle (9pt);
    \draw[fill=black] (.75,{(3)*sin(60)}) circle (9pt);
    \draw[fill=black] (.75,{(2)*sin(60)}) circle (9pt);
    \draw[fill=lightgray] (0,{(2+1/2)*sin(60)}) circle (9pt);
    \draw[fill=lightgray] (.75,{(1)*sin(60)}) circle (9pt);
    \draw[fill=lightgray] (1.5,{(1+1/2)*sin(60)}) circle (9pt);
    \node at (.75,0) {Black response};
\end{tikzpicture}
\hspace{1cm}
    \begin{tikzpicture} [hexa/.style= {shape=regular polygon,
                                   regular polygon sides=6,
                                   minimum size=1cm, draw,
                                   inner sep=0,anchor=south,}]
    \foreach \j in {0,...,2}{%
     \ifodd\j 
         \foreach \i in {0,...,3}{\node[hexa] at ({\j/2+\j/4},{(\i+1/2)*sin(60)}) {};}        
    \else
         \foreach \i in {1,...,3}{\node[hexa] at ({\j/2+\j/4},{\i*sin(60)}) {};}
    \fi}
    \draw[fill=black] (0,{(3+1/2)*sin(60)}) circle (9pt);
    \draw[fill=black] (.75,{(3)*sin(60)}) circle (9pt);
    \draw[fill=black] (.75,{(2)*sin(60)}) circle (9pt);
    \draw[fill=lightgray] (0,{(2+1/2)*sin(60)}) circle (9pt);
    \draw[fill=lightgray] (0,{(1.5)*sin(60)}) circle (9pt);
    \draw[fill=lightgray] (1.5,{(1+1/2)*sin(60)}) circle (9pt);
    \node at (.75,0) {Gray response };
\end{tikzpicture}
\hspace{1cm}
    \begin{tikzpicture} [hexa/.style= {shape=regular polygon,
                                   regular polygon sides=6,
                                   minimum size=1cm, draw,
                                   inner sep=0,anchor=south,}]
    \foreach \j in {0,...,2}{%
     \ifodd\j 
         \foreach \i in {0,...,3}{\node[hexa] at ({\j/2+\j/4},{(\i+1/2)*sin(60)}) {};}        
    \else
         \foreach \i in {1,...,3}{\node[hexa] at ({\j/2+\j/4},{\i*sin(60)}) {};}
    \fi}
    \draw[fill=black] (.75,{(3)*sin(60)}) circle (9pt);
    \draw[fill=black] (1.5,{(2.5)*sin(60)}) circle (9pt);
    \draw[fill=black] (.75,{(2)*sin(60)}) circle (9pt);
    \draw[fill=lightgray] (0,{(2+1/2)*sin(60)}) circle (9pt);
    \draw[fill=lightgray] (0,{(1.5)*sin(60)}) circle (9pt);
    \draw[fill=lightgray] (1.5,{(1+1/2)*sin(60)}) circle (9pt);
    \node at (.75,0) {Black triangle};
\end{tikzpicture}
\end{figure}
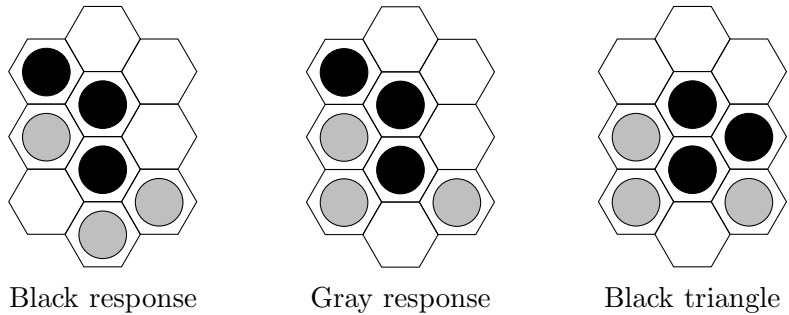

    Suppose Gray makes Bad Response 3. Then Black can take the option shown in Figure~\ref{fig:badresp3seq}.  This constellation is self negative, so Black and Gray have the same options on this board.  We saw in Figure~\ref{fig:badmove1seq} that Gray can force a win if they choose to go to this board.  By a similar argument, Black can force a win by choosing this option, meaning Gray should never make Bad Response 3.

    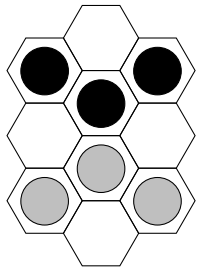
\begin{figure}[ht!]
\caption{Black option following Bad Response 3.}
\label{fig:badresp3seq}
\centering
\vspace{.5cm}
    \begin{tikzpicture} [hexa/.style= {shape=regular polygon,
                                   regular polygon sides=6,
                                   minimum size=1cm, draw,
                                   inner sep=0,anchor=south,}]
    \foreach \j in {0,...,2}{%
     \ifodd\j 
         \foreach \i in {0,...,3}{\node[hexa] at ({\j/2+\j/4},{(\i+1/2)*sin(60)}) {};}        
    \else
         \foreach \i in {1,...,3}{\node[hexa] at ({\j/2+\j/4},{\i*sin(60)}) {};}
    \fi}
    \draw[fill=black] (0,{(3+1/2)*sin(60)}) circle (9pt);
    \draw[fill=black] (.75,{(3)*sin(60)}) circle (9pt);
    \draw[fill=black] (1.5,{(3.5)*sin(60)}) circle (9pt);
    \draw[fill=lightgray] (0,{(1+1/2)*sin(60)}) circle (9pt);
    \draw[fill=lightgray] (.75,{(2)*sin(60)}) circle (9pt);
    \draw[fill=lightgray] (1.5,{(1+1/2)*sin(60)}) circle (9pt);
\end{tikzpicture}
\end{figure}

    Since we have determined that Black can force a win when Gray makes Bad Responses 1-3, the only logical moves for Gray to make are $C2$ and $C3$.
\end{proof}

We are now ready to determine the outcome class of the starting configuration of $2\times2\times3$ Abalone and therefore weakly solve the game.  

\begin{theorem}
$2\times 2\times 3$ Abalone is weakly solved.  If $C0$ denotes the initial constellation in a $2\times2\times3$ Abalone game, $o(C0) = \D$.
    \label{thm:223weaksolve}
\end{theorem}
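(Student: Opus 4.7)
The plan is to combine the two preceding lemmas with a finite case analysis showing that all optimal play from $C0$ cycles indefinitely. First, I observe that $C0$ is self-negative: the $180^\circ$ rotation of the $2\times2\times3$ grid about its center carries $-C0$ onto $C0$. By Lemma~\ref{lem:negs}, each of the outcome classes $\LL$, $\R$, $\Nhat$, $\Ncheck$ pairs a constellation with one of \emph{different} outcome under negation, so self-negativity rules all four out; hence $o(C0)\in\{\N,\D\}$. It therefore suffices to rule out $o(C0)=\N$ by exhibiting a Gray strategy from $C0$ that avoids losing.

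By Lemmas~\ref{lem:firstmove} and~\ref{lem:optimalgrayresponses}, Black's only rational first move is to $C1$, and Gray's only rational replies from $C1$ are $C2$ and $C3$. The problem then reduces to showing that from at least one of these two boards (with Black to move), Black cannot force a win. Equivalently, it suffices to show that $o(C2)$ or $o(C3)$ lies in $\{\D,\R,\Ncheck\}$, since any such outcome lets Gray pick the corresponding reply and guarantee at least a draw.

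The technical heart of the argument is a finite case analysis of the game trees rooted at $C2$ and $C3$. For each reachable board I would enumerate the mover's legal options and prune those that lose immediately. By Lemma~\ref{lem:223middle} and Corollary~\ref{cor:223middle}, I discard any move that cedes both middle cells to the opponent without creating the neutral subconfiguration of Figure~\ref{fig:2x2x3neutral}; by direct inspection I also discard any move that exposes one of the mover's own pieces to a sumito or creates a fork (as in Figure~\ref{fig:forkfun}) for the opponent. The objective is to verify the closure property foreshadowed in the paper: every rational option out of any board in the $13$-element set $S=\{C0,C1,\ldots,C8,-C4,-C5,-C7,-C8\}$ again lies in $S$. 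Once closure is established, every optimal play from $C2$ or $C3$ is an infinite walk in $S$ that never terminates with a sumito, the game is a draw, so $o(C2)=o(C3)=\D$ and consequently $o(C0)=\D$.

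The main obstacle is the bookkeeping. Although each board has only a handful of options after pruning, roughly a dozen position types must be examined separately, and in several cases the justification of a pruning step requires tracing two or three forced moves ahead — particularly when an option looks tactically neutral but secretly lets the opponent complete the triangle setup from Lemma~\ref{lem:223middle} or land a delayed fork. This mirrors, at a larger scale, the pruning argument used in the strong solution of $2\times2\times2$ Abalone, where the analogous set $S$ of draw positions was shown to be closed under rational play.
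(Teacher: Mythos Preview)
Your proposal is correct and follows essentially the same route as the paper: reduce via Lemmas~\ref{lem:firstmove} and~\ref{lem:optimalgrayresponses} to the boards $C2$ and $C3$, then prune losing options (immediate sumito, fork, or ceding the center triangle per Lemma~\ref{lem:223middle}) and verify that the resulting $13$-board set $S$ is closed under the remaining ``safe'' options, so that optimal play cycles indefinitely. Your opening observation that $C0$ is self-negative, which immediately confines $o(C0)$ to $\{\N,\D\}$, is a clean shortcut the paper does not make explicit, but it does not change the substance of the argument---in both versions the real work is the pruned option-listing that establishes closure of $S$.
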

\begin{proof}
Suppose Black and Gray are playing a game of $2\times 2\times 3$ Abalone and Black goes first.  By convention, the game begins with constellation $C0$ as depicted in Figure~\ref{fig:2x2x3optimalplayboards}.  By Lemma~\ref{lem:firstmove}, we know that Black will choose to go to board $C1$.  And by Lemma~\ref{lem:optimalgrayresponses}, we know that Gray will respond by going to either $C2$ or $C3$.  

From this point on, the players need to choose options which will \emph{not} lead to a win for their opponent.  Clearly, this means they will never choose an option where their opponent can immediately push to win.  It also means they must avoid moves which allow their opponent to create a fork.  Thus, by Lemma~\ref{lem:223middle} and Corollary~\ref{cor:223middle}, a player will never make a move that allows their opponent to safely create a triangle subconfiguration.  This means that a player will not move any of their pieces out of the middle column unless doing so creates a threat similar to the one shown in the neutral subconfiguration in Figure~\ref{fig:2x2x3neutral}. Creating this sort of a threat forces the opponent to retreat to safety, which allows the first player to reclaim their positions in the center column and prevent their opponent from controlling the middle two board positions.  However, this sort of move is not always a safe choice.  For example, Figure~\ref{fig:forkcreationsequence} shows an instance where leaving the middle column while threatening one of the opponent's pieces leads to a loss.

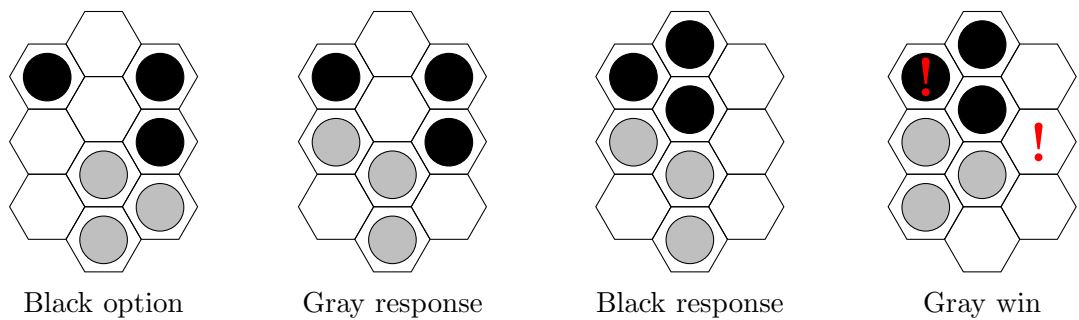
\begin{figure}[ht!]
\caption{A sequence of moves which leads to a win for Gray.}
\label{fig:forkcreationsequence}
\centering
\vspace{.5cm}
    \begin{tikzpicture} [hexa/.style= {shape=regular polygon,
                                   regular polygon sides=6,
                                   minimum size=1cm, draw,
                                   inner sep=0,anchor=south,}]
    \foreach \j in {0,...,2}{%
     \ifodd\j 
         \foreach \i in {0,...,3}{\node[hexa] at ({\j/2+\j/4},{(\i+1/2)*sin(60)}) {};}        
    \else
         \foreach \i in {1,...,3}{\node[hexa] at ({\j/2+\j/4},{\i*sin(60)}) {};}
    \fi}
    \draw[fill=black] (0,{(3+1/2)*sin(60)}) circle (9pt);
    \draw[fill=black] (1.5,{(3.5)*sin(60)}) circle (9pt);
    \draw[fill=black] (1.5,{(2.5)*sin(60)}) circle (9pt);
    \draw[fill=lightgray] (.75,{(1)*sin(60)}) circle (9pt);
    \draw[fill=lightgray] (.75,{(2)*sin(60)}) circle (9pt);
    \draw[fill=lightgray] (1.5,{(1+1/2)*sin(60)}) circle (9pt);
    \node at (.75,0) {Black option};
\end{tikzpicture}
\hspace{1cm}
    \begin{tikzpicture} [hexa/.style= {shape=regular polygon,
                                   regular polygon sides=6,
                                   minimum size=1cm, draw,
                                   inner sep=0,anchor=south,}]
    \foreach \j in {0,...,2}{%
     \ifodd\j 
         \foreach \i in {0,...,3}{\node[hexa] at ({\j/2+\j/4},{(\i+1/2)*sin(60)}) {};}        
    \else
         \foreach \i in {1,...,3}{\node[hexa] at ({\j/2+\j/4},{\i*sin(60)}) {};}
    \fi}
    \draw[fill=black] (0,{(3+1/2)*sin(60)}) circle (9pt);
    \draw[fill=black] (1.5,{(3.5)*sin(60)}) circle (9pt);
    \draw[fill=black] (1.5,{(2.5)*sin(60)}) circle (9pt);
    \draw[fill=lightgray] (0,{(2+1/2)*sin(60)}) circle (9pt);
    \draw[fill=lightgray] (.75,{(1)*sin(60)}) circle (9pt);
    \draw[fill=lightgray] (.75,{(2)*sin(60)}) circle (9pt);
    \node at (.75,0) {Gray response};
\end{tikzpicture}
\hspace{1cm}
    \begin{tikzpicture} [hexa/.style= {shape=regular polygon,
                                   regular polygon sides=6,
                                   minimum size=1cm, draw,
                                   inner sep=0,anchor=south,}]
    \foreach \j in {0,...,2}{%
     \ifodd\j 
         \foreach \i in {0,...,3}{\node[hexa] at ({\j/2+\j/4},{(\i+1/2)*sin(60)}) {};}        
    \else
         \foreach \i in {1,...,3}{\node[hexa] at ({\j/2+\j/4},{\i*sin(60)}) {};}
    \fi}
    \draw[fill=black] (0,{(3+1/2)*sin(60)}) circle (9pt);
    \draw[fill=black] (.75,{(4)*sin(60)}) circle (9pt);
    \draw[fill=black] (.75,{(3)*sin(60)}) circle (9pt);
    \draw[fill=lightgray] (0,{(2+1/2)*sin(60)}) circle (9pt);
    \draw[fill=lightgray] (.75,{(1)*sin(60)}) circle (9pt);
    \draw[fill=lightgray] (.75,{(2)*sin(60)}) circle (9pt);
    \node at (.75,0) {Black response};
\end{tikzpicture}
\hspace{1cm}
    \begin{tikzpicture} [hexa/.style= {shape=regular polygon,
                                   regular polygon sides=6,
                                   minimum size=1cm, draw,
                                   inner sep=0,anchor=south,}]
    \foreach \j in {0,...,2}{%
     \ifodd\j 
         \foreach \i in {0,...,3}{\node[hexa] at ({\j/2+\j/4},{(\i+1/2)*sin(60)}) {};}        
    \else
         \foreach \i in {1,...,3}{\node[hexa] at ({\j/2+\j/4},{\i*sin(60)}) {};}
    \fi}
    \draw[fill=black] (0,{(3+1/2)*sin(60)}) circle (9pt);
    \draw[fill=black] (.75,{(4)*sin(60)}) circle (9pt);
    \draw[fill=black] (.75,{(3)*sin(60)}) circle (9pt);
    \draw[fill=lightgray] (0,{(2+1/2)*sin(60)}) circle (9pt);
    \draw[fill=lightgray] (.75,{(2)*sin(60)}) circle (9pt);
    \draw[fill=lightgray] (0,{(1.5)*sin(60)}) circle (9pt);
    \node at (1.5,{(2+1/2)*sin(60)}) {{\color{red} \huge\textbf{!}}};
    \node at (0,{(3+1/2)*sin(60)}) {{\color{red} \huge\textbf{!}}};
    \node at (.75,0) {Gray win};
\end{tikzpicture}
\end{figure}

Notice that Black moves out of the middle column to threaten a Gray piece.  Gray must respond by moving their endangered piece to safety.  Then Black must respond by moving back into the middle column to prevent Gray from creating a triangle subconfiguration and winning.  However, Gray is then able to create a fork which threatens one of Black's pieces and the space Black must move into to protect the threatened piece.  Therefore, Black will never choose the option shown in Figure~\ref{fig:forkcreationsequence}, and by a similar argument, Gray will never choose its negative.  

As a result of these gameplay conventions, an optimal game of $2\times2\times3$ Abalone will involve only the following ``safe'' options:
\begin{center}
\begin{tabular}{rcll}
    $C0^L$ & $ =$ & $\{C1\}$& \\
    \hline
    $C1^R$ & $ =$ & $\{C2, C3\}$&\\
    \hline
    $C2^L$ & $ =$ & $\{C4, C5\}$&\\
    $C3^L$ & $ =$ & $ \{C4\}$& \\
    \hline
    $C4^R$ & $ =$ & $ \{C6\}$& \\
    $C5^R$ & $ =$ & $ \{C7\}$& \\
    \hline
    $C6^L$ & $ =$ & $ \{C8\}$& \\
    $C7^L$ & $ =$ & $ \{C6, C8\}$& \\
    \hline
    $C6^R$ & $ =$ & $ \{-C8 \}$& \\
    $C8^R$ & $ =$ & $ \{C2, C3\}$&  (Loop back for $C2^L$ and $C3^L$) \\
    \hline
    $-C8^L$ & $ =$ & $ \{C2, C3\}$& \\
    \hline
    $C2^R$ & $ =$ & $ \{-C4, -C5\}$&\\
    $C3^R$ & $ =$ & $ \{-C4\}$& \\
    \hline
    $-C4^L$ & $ =$ & $ \{C6\}$& (Loop back for $C6^R$)\\
    $-C5^L$ & $ =$ & $ \{-C7\}$& \\
    \hline
    $-C7^R$ & $ =$ & $ \{C6, -C8\}$& (Loop back for $C6^L$)\\
    \hline
    $-C8^L$ & $ =$ & $ \{C2, C3\}$& (Loop back for $C2^R$ and $C3^R)$
\end{tabular}
\end{center}
The looping back that occurs at boards $C2$, $C3$, and $C6$ ensures that gameplay will be limited to the 13 boards that are listed. Further, since players always have a safe option on the boards where they play next, neither player will be able to force a win, and the game will go on indefinitely.  A similar argument holds when Gray makes the first move. Thus, $o(C0)=\D$, and $2\times 2\times 3$ Abalone is weakly solved.

\end{proof}

\section{Future Work}
\label{sec:futurework}

Strong solutions for all Abalone boards larger than $2\times2\times2$ are still unknown. However, we do have conjectures regarding weak solutions for $2\times3\times3$ and $3\times3\times3$ Abalone. These variants have the same general rules as standard Abalone, except to win a player only needs to push off two of their opponent's pieces. Our initial proposed starting boards for these variants are shown in Figure~\ref{fig:starts}.  Note that $3\times3\times3$ Abalone would use a daisy configuration.

\begin{figure}[ht!]
\caption{Initial configurations for $2\times3\times3$ Abalone (left) and $3\times3\times3$ Abalone (right).}
\label{fig:starts}
\begin{minipage}[c]{0.5\linewidth}
\centering
    \begin{tikzpicture} [hexa/.style= {shape=regular polygon,
                                   regular polygon sides=6,
                                   minimum size=1cm, draw,
                                   inner sep=0,anchor=south,}]
    \foreach \j in {0,...,4}{%
    \ifodd\j 
         \foreach \i in {1,...,3}{\node[hexa] at ({\j/2+\j/4},{(\i+1/2)*sin(60)}) {};}
    \else
        \ifthenelse{\j=0 \or \j=4}{\foreach \i in {2,3}{\node[hexa] at ({\j/2+\j/4},{\i*sin(60)}) {};}}{\foreach \i in {1,...,4}{\node[hexa] at ({\j/2+\j/4},{\i*sin(60)}) {};}}
    
    \fi}
    
    \draw[fill=black] (0,{(2+1/2)*sin(60)}) circle (9pt);
    \draw[fill=black] (0,{(3+1/2)*sin(60)}) circle (9pt);
    \draw[fill=black] (.75,{(2)*sin(60)}) circle (9pt);
    \draw[fill=black] (.75,{(3)*sin(60)}) circle (9pt);
    \draw[fill=black] (.75,{(4)*sin(60)}) circle (9pt);
    
    \draw[fill=lightgray] (2.25,{(2)*sin(60)}) circle (9pt);
    \draw[fill=lightgray] (2.25,{(3)*sin(60)}) circle (9pt);
    \draw[fill=lightgray] (2.25,{(4)*sin(60)}) circle (9pt);
    \draw[fill=lightgray] (3,{(2+1/2)*sin(60)}) circle (9pt);
    \draw[fill=lightgray] (3,{(3+1/2)*sin(60)}) circle (9pt);
    
\end{tikzpicture}
\end{minipage}
\begin{minipage}[c]{0.5\linewidth}
\rotatebox{60}{
\begin{tikzpicture} [hexa/.style= {shape=regular polygon,
                                   regular polygon sides=6,
                                   minimum size=1cm, draw,
                                   inner sep=0,anchor=south,}]
    \foreach \j in {0,...,4}{%
    \ifodd\j 
         \foreach \i in {0,...,3}{\node[hexa] at ({\j/2+\j/4},{(\i+1/2)*sin(60)}) {};}
    \else
        \ifthenelse{\j=0 \or \j=4}{\foreach \i in {1,...,3}{\node[hexa] at ({\j/2+\j/4},{\i*sin(60)}) {};}}{\foreach \i in {0,...,4}{\node[hexa] at ({\j/2+\j/4},{\i*sin(60)}) {};}}
    
    \fi}
    
    \draw[fill=lightgray] (0,{(3+1/2)*sin(60)}) circle (9pt);
    \draw[fill=lightgray] (.75,{(3)*sin(60)}) circle (9pt);
    \draw[fill=lightgray] (0,{(2.5)*sin(60)}) circle (9pt);
    
    \draw[fill=black] (.75,{(1)*sin(60)}) circle (9pt);
    \draw[fill=black] (1.5,{(.5)*sin(60)}) circle (9pt);
    \draw[fill=black] (1.5,{(1.5)*sin(60)}) circle (9pt);

    \draw[fill=black] (2.25,{(4)*sin(60)}) circle (9pt);
    \draw[fill=black] (1.5,{(4.5)*sin(60)}) circle (9pt);
    \draw[fill=black] (1.5,{(3.5)*sin(60)}) circle (9pt);
    
    \draw[fill=lightgray] (3,{(1+1/2)*sin(60)}) circle (9pt);
    \draw[fill=lightgray] (2.25,{(2)*sin(60)}) circle (9pt);
    \draw[fill=lightgray] (3,{(2+1/2)*sin(60)}) circle (9pt);
    
\end{tikzpicture}
}
\end{minipage}
\end{figure}
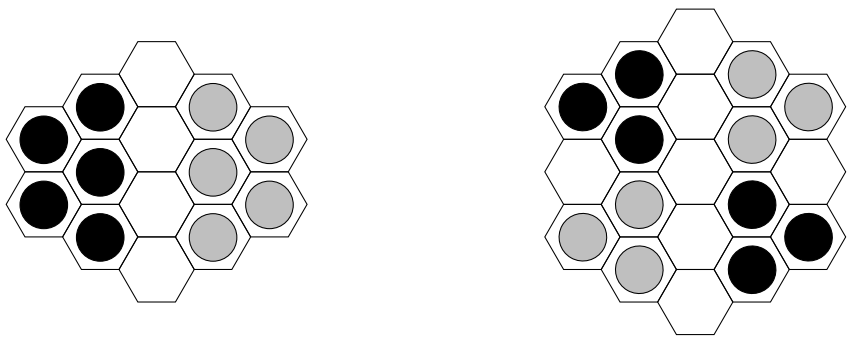

After some experimenting, we developed the following two conjectures.
\begin{conjecture}
    If $D0$ denotes the initial constellation of $2\times 3\times 3$ Abalone, then $o(D0) = \D$.
\end{conjecture}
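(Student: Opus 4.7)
The plan is to mirror the approach used for $2\times 2\times 3$ Abalone in Theorem~\ref{thm:223weaksolve}: identify a small collection of ``safe'' constellations that optimal play cannot escape, and demonstrate that each player always has a safe option available. First, I would establish a structural analog of Lemma~\ref{lem:223middle} by characterizing which subconfigurations on the $2\times3\times3$ board constitute an immediate forcing win. On this larger grid, control of a central hexagon of spaces, together with a compact triangular cluster of three same-colored marbles adjacent to that hexagon, should produce a fork that cannot be simultaneously defended. I would enumerate (with SageMath assistance) the classes of positions in which such a cluster can be formed in at most a bounded number of moves, and show that each such class has outcome $\LL$ (or $\R$ by Lemma~\ref{lem:negs}).

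Next, I would analyze the starting board $D0$ by pruning irrational first moves as in Lemma~\ref{lem:firstmove}. Because $D0$ has more symmetry and Black has an extra marble to manage, the branching factor will be larger, but the same type of case analysis should apply: any first move that cedes control of a central space or exposes a marble to a two-on-one sumito lets Gray establish the winning cluster identified above, and therefore must be rejected. A symmetric analysis should constrain Gray's reply to a very short list of safe responses. Continuing in this manner, I would build up the option table analogous to the one displayed inside the proof of Theorem~\ref{thm:223weaksolve}, closed under the safe-move relation.

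The core combinatorial step is then to verify that this option table loops: every ``safe'' option from a board $C$ in the table lands in another board in the table, and from every board at least one safe option exists for the player about to move. Once this is established, no optimal run can terminate in a win or loss, so $o(D0)=\D$. A corresponding strategy-swapping argument (via Lemma~\ref{lem:negs}) handles the case where Gray plays first.

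The main obstacle will be the sheer size of the position graph. The paper already reports over 555 nonisomorphic $2\times 2\times 3$ positions; the $2\times 3\times 3$ board has considerably more spaces and marbles, so the number of nonisomorphic constellations will be orders of magnitude larger and a purely hand-checked pruning argument will be infeasible. The proof will almost certainly depend on a verified computer search that enumerates positions, applies the ``no safe triangle for the opponent'' heuristic to discard branches, and certifies that the resulting induced subgraph is both closed under optimal play and strongly connected in the sense that every node has an outgoing safe edge. A secondary difficulty is cataloging the correct winning subconfigurations: on the $2\times 2\times 3$ board a single triangle sufficed, but the $2\times 3\times 3$ board admits more varied compact clusters, and one would need to confirm that no such cluster has been overlooked in the pruning step before the draw conclusion is reliable.
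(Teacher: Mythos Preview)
The statement you are addressing is a \emph{conjecture} in the paper, not a theorem; the paper offers no proof of it at all. It appears in Section~\ref{sec:futurework} precisely because the authors did not resolve it, so there is no argument against which to compare your proposal.

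What you have written is a sensible research plan modeled on the proof of Theorem~\ref{thm:223weaksolve}, and you are candid about its two main vulnerabilities: the combinatorial explosion of the $2\times3\times3$ position graph, and the need to correctly catalog all winning subconfigurations before pruning. But a plan is not a proof. In particular, your outline assumes that the analog of Lemma~\ref{lem:223middle} exists and that the resulting pruned graph actually loops; neither of these is established, and either could fail (indeed, the authors' own Conjecture~\ref{conj:333} suggests that on a nearby board the first player may win outright, so the draw outcome is not a foregone conclusion). Until the computer search is actually carried out and the subconfiguration catalog is verified complete, the argument has a genuine gap at its core.
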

\begin{conjecture}\label{conj:333}
    If $E0$ denotes the initial constellation of $3\times3\times3$ Abalone, then $o(E0)=\N$.
\end{conjecture}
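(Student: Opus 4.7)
The plan is to prove $o(E0) = \N$ by reducing to a ``first mover has a winning strategy'' claim and then exhibiting one. First, I would verify that $E0$ is self-negative: the $3\times3\times3$ daisy configuration is invariant under a $60^\circ$ rotation composed with the color swap that defines $-C$, so $E0$ is isomorphic to $-E0$. By the observations in Section~\ref{subsec:cgt}, the only outcome classes $o(C)$ that satisfy $o(C)=o(-C)$ are $\N$ and $\D$ (since $\LL \leftrightarrow \R$ and $\Nhat \leftrightarrow \Ncheck$ under negation). Thus $o(E0) \in \{\N, \D\}$, and the conjecture reduces to ruling out a draw.

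The principal structural tool I would develop is a central-dominance lemma analogous to Lemma~\ref{lem:223middle}: if a player can safely establish a compact three-marble cluster occupying or adjacent to the center hex, then within a bounded number of turns they can either push off an opponent marble or force a fork analogous to Figure~\ref{fig:forkfun}. Because the $3\times3\times3$ win condition requires pushing off two opponent marbles rather than one, I would iterate this into a ``two-push lemma'' that certifies a forced win from positions with reinforced central control after a first successful push. Establishing these dominance lemmas is where the main geometric work happens, and it would involve tracing a short exhaustive list of safe cluster configurations and the sumito continuations available from each.

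With the dominance lemmas in hand, the proof would conclude by exhibiting a specific first move for Black, most naturally a broadside advance of one of the daisy's three-marble arms straight toward the center, and checking that every non-isomorphic Gray response admits a Black continuation forcing a position covered by the two-push lemma. Because the state space on $3\times3\times3$ is orders of magnitude larger than the $2\times2\times3$ case (where the paper reports over $555$ nonisomorphic constellations), this enumeration realistically requires extending the SageMath framework of Section~\ref{sec:2x2x3} to perform a retrograde labeling of constellations by outcome class and then extracting a human-readable forcing line from the output. By self-negativity, any winning strategy for Black transfers via a $60^\circ$ rotation to one for Gray, so only one color needs to be analyzed in detail.

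The main obstacle is ruling out draws by repetition. With six marbles per side and many peripheral shuffles available, a large portion of the proof's difficulty lies in certifying that the first mover's strategy always makes measurable progress rather than getting trapped in a defensive loop analogous to the one that produces $o(C0)=\D$ in the $2\times2\times3$ game. I expect the heart of the proof to be a potential-function or bounded-progress argument, layered on top of the dominance lemmas, that shows Black's forcing line strictly tightens some measure of central control at each pair of moves until the two-push condition is triggered; producing such a monotone invariant, and verifying it against the full computer-generated move graph, is where the bulk of the remaining work lies.
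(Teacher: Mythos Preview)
The statement you are attempting to prove is labeled \emph{Conjecture}~\ref{conj:333} and appears in the paper's Future Work section; the paper does not prove it. The authors offer only the heuristic observation that a broadside advance of two marbles into the center (Figure~\ref{fig:333start}) looks strong and that they ``were unable to find a Gray response to this move that Black cannot counter.'' There is therefore no proof in the paper to compare your proposal against.

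As for the proposal itself: what you have written is a research plan, not a proof. The reduction $o(E0)\in\{\N,\D\}$ via self-negativity is correct and useful, and your suggested toolkit (a $3\times3\times3$ analogue of Lemma~\ref{lem:223middle}, a two-push iteration, retrograde analysis in SageMath, a progress invariant to rule out draws) is reasonable. But none of these pieces is actually established: the ``central-dominance lemma'' and ``two-push lemma'' are stated as goals rather than results, the exhaustive check of Gray responses is deferred to a computation you have not run, and the potential-function argument against repetition is described only as ``where the bulk of the remaining work lies.'' In its present form the proposal does not settle the conjecture; it outlines how one might try to.
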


We were surprised by the possibility that this $3\times 3\times 3$ Abalone board could be won by the first player to make a move.  Our reason for conjecturing so is as follows: Suppose Black moves first.  They can immediately move two marbles into middle spaces in a broadside motion. (See Figure~\ref{fig:333start}.) Taking this option offers Black a position that separates Gray's pieces, and we were unable to find a Gray response to this move that Black cannot counter.  If Conjecture~\ref{conj:333} is correct, then we would need to select a more balanced starting configuration than $E0$ for $3\times 3\times 3$ Abalone.

\begin{figure}[ht!]
\caption{A strong first option for Black in $3\times3\times3$ Abalone.}
\label{fig:333start}
\centering
\rotatebox{60}{
\begin{tikzpicture} [hexa/.style= {shape=regular polygon,
                                   regular polygon sides=6,
                                   minimum size=1cm, draw,
                                   inner sep=0,anchor=south,}]
    \foreach \j in {0,...,4}{%
    \ifodd\j 
         \foreach \i in {0,...,3}{\node[hexa] at ({\j/2+\j/4},{(\i+1/2)*sin(60)}) {};}
    \else
        \ifthenelse{\j=0 \or \j=4}{\foreach \i in {1,...,3}{\node[hexa] at ({\j/2+\j/4},{\i*sin(60)}) {};}}{\foreach \i in {0,...,4}{\node[hexa] at ({\j/2+\j/4},{\i*sin(60)}) {};}}
    
    \fi}
    
    \draw[fill=lightgray] (0,{(3+1/2)*sin(60)}) circle (9pt);
    \draw[fill=lightgray] (.75,{(3)*sin(60)}) circle (9pt);
    \draw[fill=lightgray] (0,{(2.5)*sin(60)}) circle (9pt);
    
    \draw[fill=black] (.75,{(1)*sin(60)}) circle (9pt);
    \draw[fill=black] (1.5,{(.5)*sin(60)}) circle (9pt);
    \draw[fill=black] (1.5,{(1.5)*sin(60)}) circle (9pt);

    \draw[fill=black] (2.25,{(3)*sin(60)}) circle (9pt);
    \draw[fill=black] (1.5,{(4.5)*sin(60)}) circle (9pt);
    \draw[fill=black] (1.5,{(2.5)*sin(60)}) circle (9pt);
    
    \draw[fill=lightgray] (3,{(1+1/2)*sin(60)}) circle (9pt);
    \draw[fill=lightgray] (2.25,{(2)*sin(60)}) circle (9pt);
    \draw[fill=lightgray] (3,{(2+1/2)*sin(60)}) circle (9pt);
    
\end{tikzpicture}
}
\end{figure}
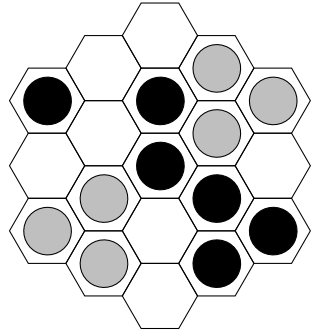

The strategy by which we proved Theorem~\ref{thm:223weaksolve} and developed Conjecture~\ref{conj:333} hints at a more elegant approach to solving further Abalone variants.  Even with computer assistance, an exhaustive analysis of each nonisomorphic constellation would quickly exceed the computing power available today.  Instead, it may be more productive for researchers to focus their efforts on identifying winning subconfigurations.  We expect that many of these subconfigurations will contain forks or convex clusters of pieces in the middle of the board.  For example, consider the families ``Diamond'' and ``Trapezoid'' of $2\times3\times3$ Abalone constellations, which are sets of constellations which contain the subconfigurations shown in Figure~\ref{fig:traps}.  We conjecture that Black has a winning strategy for boards of these types.

\begin{figure}[ht!]
\caption{The ``Diamond'' (left) and ``Trapezoid'' (right) subconfigurations in $2\times3\times3$ Abalone.}
\label{fig:traps}
\begin{minipage}[c]{0.5\linewidth}
\centering
\vspace{.5cm}
    \begin{tikzpicture} [hexa/.style= {shape=regular polygon,
                                   regular polygon sides=6,
                                   minimum size=1cm, draw,
                                   inner sep=0,anchor=south,}]
    \foreach \j in {0,...,4}{%
    \ifodd\j 
         \foreach \i in {1,...,3}{\node[hexa] at ({\j/2+\j/4},{(\i+1/2)*sin(60)}) {};}
    \else
        \ifthenelse{\j=0 \or \j=4}{\foreach \i in {2,3}{\node[hexa] at ({\j/2+\j/4},{\i*sin(60)}) {};}}{\foreach \i in {1,...,4}{\node[hexa] at ({\j/2+\j/4},{\i*sin(60)}) {};}}
    
    \fi}
    
    \draw[fill=black] (1.5,{(2+1/2)*sin(60)}) circle (9pt);
    \draw[fill=black] (1.5,{(3+1/2)*sin(60)}) circle (9pt);
    \draw[fill=black] (.75,{(3)*sin(60)}) circle (9pt);
    \draw[fill=black] (2.25,{(3)*sin(60)}) circle (9pt);
    
\end{tikzpicture}
\end{minipage}
\begin{minipage}[c]{0.4\linewidth}
\centering
\vspace{0.5cm}
\begin{tikzpicture} [hexa/.style= {shape=regular polygon,
                                   regular polygon sides=6,
                                   minimum size=1cm, draw,
                                   inner sep=0,anchor=south,}]
    \foreach \j in {0,...,4}{%
    \ifodd\j 
         \foreach \i in {1,...,3}{\node[hexa] at ({\j/2+\j/4},{(\i+1/2)*sin(60)}) {};}
    \else
        \ifthenelse{\j=0 \or \j=4}{\foreach \i in {2,3}{\node[hexa] at ({\j/2+\j/4},{\i*sin(60)}) {};}}{\foreach \i in {1,...,4}{\node[hexa] at ({\j/2+\j/4},{\i*sin(60)}) {};}}
    
    \fi}
    
    \draw[fill=black] (1.5,{(2+1/2)*sin(60)}) circle (9pt);
    \draw[fill=black] (1.5,{(3+1/2)*sin(60)}) circle (9pt);
    \draw[fill=black] (.75,{(2)*sin(60)}) circle (9pt);
    \draw[fill=black] (.75,{(3)*sin(60)}) circle (9pt);
    \draw[fill=black] (.75,{(4)*sin(60)}) circle (9pt);
    
\end{tikzpicture}
\end{minipage}
\end{figure}
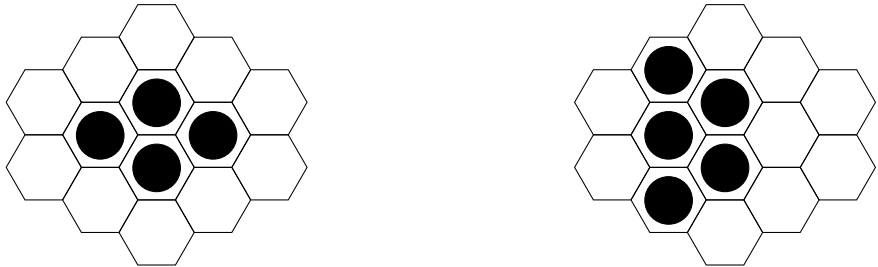

\begin{conjecture}
    If $C$ is a $2\times 3\times 3$ Abalone constellation in the ``Diamond'' or ``Trapezoid'' family, then $o(C)=\LL$.
\end{conjecture}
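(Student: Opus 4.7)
The plan is to establish $o(C)=\LL$ for both families by showing that Black's central cluster in each case is strong enough to force a win regardless of who moves first. The main tool will be an extension of Lemma~\ref{lem:223middle} to the $2\times3\times3$ board: I would first prove a preliminary lemma identifying which central Black subconfigurations (i) cannot be disrupted by any Gray push, and (ii) allow Black to reach a fork or sumito within a bounded number of moves. Both the Diamond and the Trapezoid should qualify, since each puts at least two Black marbles on the middle ``spine'' of the board and supports them with flanking marbles, so any Gray line attempting a sumito against a central Black piece is either shorter than the Black response line or can be preemptively broken by Black's flankers.

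From here, I would split the argument into the two families. For the Trapezoid, the five-marble configuration is already highly offensive: it threatens multiple Gray-accessible squares simultaneously, and I would show by direct case analysis that every Gray marble placement either already loses a piece to immediate sumito or admits a Black move creating a fork analogous to Figure~\ref{fig:forkfun}. For the Diamond, the argument is subtler because Black has only four marbles committed; the fifth Black marble must be deployed over one or two moves to create a winning asymmetry. I would classify the possible positions of the fifth Black marble and of Gray's five marbles up to isomorphism, and in each class exhibit an explicit Black strategy: typically a broadside swing that either creates an additional central threat or traps a Gray marble against an edge so that Gray's only legal escape enables the fork.

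The case split must handle both players-to-move. When Black moves next, the strategy above applies directly. When Gray moves next, I would argue that Gray cannot simultaneously (a) dislodge any Black central marble, (b) install both Gray marbles into the middle column (required by Corollary~\ref{cor:223middle} for a Gray win), and (c) avoid creating a new Black threat. A useful invariant will be a monovariant counting something like the number of empty neighbors of Black's central cluster; if Gray cannot decrease this quantity without conceding a piece, then after Gray's move Black is back in a position to which the player-to-move case applies.

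The hard part will be the sheer volume of cases, combined with verifying that Black can always \emph{maintain} either the Diamond or the Trapezoid (or transition between them and the winning triangle configurations) under any optimal Gray defense, including corner-retreat strategies. I expect a purely hand-written argument to require an auxiliary ``safe transition'' lemma cataloguing which Black configurations are reachable from which, together with a SageMath-assisted enumeration analogous to the $555$-constellation computation used earlier; the potential-function argument above is the most promising route to avoid pure brute force, but verifying monotonicity across all Gray replies remains the principal obstacle.
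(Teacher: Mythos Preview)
The statement you are attempting to prove is presented in the paper as a \emph{conjecture} in the Future Work section; the authors give no proof, only the heuristic remark that such subconfigurations ``contain forks or convex clusters of pieces in the middle of the board.'' There is therefore no paper proof to compare your proposal against.

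Evaluated on its own, your plan has a structural gap. In $2\times3\times3$ Abalone the paper specifies that a player must push off \emph{two} of the opponent's marbles to win, not one. Your entire argument is phrased in terms of reaching a single sumito or a single fork (``already loses a piece to immediate sumito or admits a Black move creating a fork''), which in this variant does not end the game. After the first Gray marble is ejected, Gray still has four marbles on a $14$-cell board, and you would need to show that Black can maintain enough central control to force a second ejection; nothing in your outline addresses play after the first push, and the invariant you propose (count of empty neighbors of the Black cluster) says nothing about this phase.

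There is also a subtler issue with the Diamond case. You assume Black can always ``deploy the fifth marble over one or two moves to create a winning asymmetry,'' but the conjecture asserts $o(C)=\LL$ for \emph{every} constellation containing the Diamond subconfiguration, including those in which the fifth Black marble is isolated in a corner and Gray's five marbles are already well placed. Your classification ``up to isomorphism'' of the fifth Black marble and all five Gray marbles is doing almost all of the work, and until that enumeration is actually carried out there is no argument, only a promise of one. The monovariant you float is not obviously monotone: Gray can often \emph{increase} the number of empty neighbors of the Black cluster by retreating, so you would need a sharper potential function before this line can replace brute force.
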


\section{Acknowledgements}
\label{sec:acknowledgements}

This research was completed during Summer 2019 while Gutstadt and Koerner were undergraduate students and Hogenson was visiting faculty at Colorado College.  The authors would like to thank the Colorado College Office of the Provost for supporting Gutstadt and Koerner via a Faculty Student Collaborative Research (SCoRe) grant.  In particular, we recognize the support and interest of Lisa Schwartz, who was the SCoRe Program Coordinator at the time.  We also extend our thanks to the Colorado College Department of Mathematics and Computer Science for allowing us to use their facilities.


\typeout{}

\bibliography{references}


\appendix
\section{Author Biographies}
\label{appendix:bios}

\textbf{Joseph Gutstadt} received a BA in Mathematics from Colorado College in 2020.  He now works as a Data Engineer in Med Tech, and is currently pursuing an MS in computer science at Georgia Tech.\\ 

\noindent\textbf{John Koerner} received a BA in Mathematical Economics from Colorado College in 2020 and a BS in Applied Mathematics from Columbia University in 2022. He now works in the renewable energy industry and is pursuing an MS in Applied Mathematics at Columbia University.

\end{document}